\newtheorem{definition}{Definition}[section]
\newtheorem{lemma}{Lemma}[section]
\newtheorem{theorem}{Theorem}[section]
\newtheorem{remark}{Remark}[section]
\numberwithin{equation}{section}
\title{Effective Numerical Simulations of Synchronous Generator System}
\author[1, 2]{Jiawei Zhang}
\author[1, 2]{Aiqing Zhu}
\author[3]{Feng Ji}
\author[3]{Chang Lin}
\author[1, 2]{Yifa Tang}
\affil[1]{LSEC, ICMSEC, Academy of Mathematics and Systems Science, Chinese Academy of Sciences, Beijing 100190, China.}
\affil[2]{School of Mathematical Sciences, University of Chinese Academy of Sciences, Beijing 100049, China.}
\affil[3]{State Key Laboratory of Advanced Transmission Technology (State Grid Smart Grid Research Institute Co., Ltd), Beijing 102200, China.}
\date{}
\begin{document}
\maketitle

\begin{abstract}
Synchronous generator system is a complicated dynamical system for energy transmission, which plays an important role in modern industrial production. In this article, we propose some predictor-corrector methods and structure-preserving methods for a generator system based on the first benchmark model of subsynchronous resonance, among which the structure-preserving methods preserve a Dirac structure associated with the so-called port-Hamiltonian descriptor systems. To illustrate this, the simplified generator system in the form of index-1 differential-algebraic equations has been derived. Our analyses provide the global error estimates for a special class of structure-preserving methods called Gauss methods, which guarantee their superior performance over the PSCAD/EMTDC and the predictor-corrector methods in terms of computational stability. Numerical simulations are implemented to verify the effectiveness and advantages of our methods. \\     
{\bf{Keywords}}
\ Synchronous generator system, Predictor-corrector method, Structure-preserving method, Port-Hamiltonian descriptor system, Differential-algebraic equations
\end{abstract}

\section{Introduction}
The power system is an energy production and consumption system composed of power plants, transmission and distribution lines, power supply and distribution stations, and electricity consumption. Over the past hundred years, the power system has gained substantial progress, among which the successful development of three-phase alternating current (AC) synchronous generator has become a milestone for the great advancement of the power system. In recent decades, the power system has developed into a complicated dynamical system for energy transmission, with the electromechanical transient model playing a vital role in theoretical analysis and practical application (see \cite{PSSC}). Actually, since the introduction of synchronous generator models into the electromagnetic transient programs in the 1970s (see \cite{SGM}), the electromagnetic transient simulations, which were primarily used to analyze the electromagnetic transient processes in power networks, have been gradually applied to the study of the electromechanical transient processes. Classical Electro-Magnetic Transient Program (EMTP) softwares divide the entire system into three modules: circuit, generator and mechanical shaft. During the numerical simulation, each module operates independently and exchanges data with each other, which inevitably leads to a delay of one time step and a decline of numerical accuracy (see \cite{PSETS}). The main reason for this phenomenon is that the electric network equations for the electromechanical transient model are presented in the form of algebraic equations, which leads to the node voltages not being written as the state quantities of the differential equations for this model. In this way, it is difficult to implement the existing numerical methods solving ordinary differential equations to the electromechanical transient model of power system. This shortage motivates us to construct a more reasonable model that allows the node voltages to be part of the state. To this end, Ji et al. proposed a novel modeling approach in \cite{DMMS} for the AC synchronous generator system based on the first benchmark model of subsynchronous resonance, by means of the Euler-Lagrange equation with the node flux linkages and mechanical angular displacements as the generalized coordinates. For this new model, the electromagnetic transient process of the synchronous generator system can be numerically simulated at the microsecond level.

The predictor-corrector methods are one of the effective numerical integrators for the newly constructed synchronous generator system. On the whole, they can be considered as a generalization of the classical Adams-Bashforth-Moulton method, which is well known for the numerical solution of first-order differential equations (see \cite[Section III.1]{SODE1}). The main idea of the predictor-corrector technique is to derive a method with better convergence property through the appropriate combination of an explicit method and an implicit method. For instance, consider the following well-known initial value problem for first-order differential equation
\begin{align*}
\dot{v}=f(t,v(t)),\ v(0)=v_0,    
\end{align*}
where $f$ represents an arbitrary differentiable function with sufficient smoothness. The most common predictor-corrector method is the improved Euler method, who takes the forward Euler method as a predictor equation to obtain the preliminary approximation $v_{n+1}^{[0]}=v_n+hf(t_n,v_n)$ of the exact solution $v(t_{n+1})$, here $h$ is the time step, $t_n=nh$ and $v_n$ represents the numerical solution at the moment $t_n$. Then the final calculated solution $v_{n+1}$ will be given by a corrector equation based on the trapezoidal rule, which reads         
\begin{align*}
v_{n+1}=v_n+\frac{h}{2}\left(f\left(t_{n+1},v_{n+1}^{[0]}\right)+f(t_n,v_n)\right).    
\end{align*}
Note that the improved Euler method proposed above is an explicit method of order $2$, which indicates that the combination of the forward Euler method and trapezoidal rule gains higher accuracy than Euler method without too much increase in the computational complexity. Therefore, in view of the slow variation of the mechanical angular velocities within a time step of the electromagnetic transient calculation, the predictor-corrector methods for synchronous generator system will be built in this article following the idea of the improved Euler method, which will show excellent accuracy in numerical simulations.  

Apart from the predictor-corrector methods, there also exist a series of structure-preserving methods that are suitable for numerical simulation of the synchronous generator system. Generally speaking, structure-preserving algorithms are numerical methods constructed by preserving the inherent structure and characteristic properties of a system, which have the feature of long-term computational stability. Over the past few decades, structure-preserving methods have been widely applied in various areas such as molecular dynamics, quantum physics and astrodynamics, among which the symplectic method for Hamiltonian systems is a typical representative (see \cite{S1,S2,S3}). Compared to conventional integrators such as explicit Runge-Kutta methods, structure-preserving methods, in particular the symplectic methods, demonstrate superior long-term behavior including the slower error growth and approximate preservation of the energy. In consideration of the unique advantages of structure-preserving methods, many related researches have been taken in recent years. For instance, explicit symplectic or K-symplectic algorithms have been developed for charged particle dynamics (see \cite{CPD1,CPD3,CPD4,CPD5}). In addition, adaptive symplectic methods for simulating charged particle dynamics are also studied in \cite{CPD2}. On the other hand, structure-preserving methods still maintain their advantageous performance in the computational simulation for gyrocenter dynamics, which can be seen in \cite{GCD1,GCD2}. In terms of the nonlinear Schrodinger equation, Zhu et al. propose a symplectic simulation method for the motion of dark solitons in \cite{NLS1}, and Zhang et al. put forward revertible and symplectic methods for the Ablowitz-Ladik discrete nonlinear Schrodinger equation in \cite{NLS2}. As for the Vlasov-Maxwell system, there have been canonical or non-canonical symplectic particle-in-cell algorithms to simulate it (see \cite{PIC1,PIC2}), among which the method presented in  \cite{PIC1} is applicable to long-term large-scale simulations. Moreover, Tu et al. present high order symplectic integrators given by generating functions for many-body problem in \cite{NHO}, and Zhu et al. employ splitting technique to derive K-symplectic methods for non-canonical separable Hamiltonian systems in \cite{SKS}. Just recently, Zhu et al. proposed explicit K-symplectic methods for some nonseparable non-canonical Hamiltonian systems in \cite{EKS}. 

Inspired by the outstanding computational stability of structure-preserving methods demonstrated in the above mentioned works, we will propose structure-preserving methods for the  synchronous generator system. These methods preserve a Dirac structure associated with port-Hamiltonian descriptor systems, and more details about this kind of systems and Dirac structure can be found in \cite{pHDAE,PHST}. In this article, we will also perform numerical simulations of a structure-preserving method, then compare it with predictor-corrector methods and PSCAD/EMTDC (a widely-used professional software for electromagnetic transient simulation). Numerical results show that both predictor-corrector methods and the structure-preserving method possess significantly better performance over PSCAD/EMTDC, and structure-preserving method exhibits the best performance in terms of long-term computational stability.

This article is organized as follows. In Section \ref{section 2}, we briefly introduce the synchronous generator system derived by the novel modeling approach, where the main parameters of this system will be given. Section \ref{section 3} concentrates on the construction of predictor-corrector methods following the idea of the improved Euler method, then structure-preserving methods will be presented in Section \ref{section 4} with their Dirac-structure preservation. In Section \ref{section 5}, numerical simulations of the methods proposed in previous sections are carried out to verify their numerical behaviours. Afterwards, a brief summary will be made in Section \ref{section 6}. Finally, Appendix \ref{appendix} will complete the proof of the global error estimates for a special class of structure-preserving methods.

\section{Synchronous generator system\label{section 2}}
This article focuses on a synchronous generator system based on the first benchmark model of subsynchronous resonance (see \cite{FBMSR}), whose concrete details are shown in the following Figure \ref{Fig:ACSGS}. In brief, line resistance $R=0.5\ \mathrm{m}\Omega$ and inductance $L=0.6182\ \mathrm{mH}$ are considered as part of the generator system in Figure \ref{Fig:ACSGS}(a), series-connected with an ideal AC voltage source whose amplitude is $U_s=26\ \mathrm{kV}$. Moreover, the synchronous generator in Figure \ref{Fig:ACSGS}(b) has three output ports $a,b,c$, which can be simplified to two directions $\alpha,\beta$ by Clarke transformation; $R_f, R_q$ are the resistors of the excitation winding and the damper winding, respectively. As for the mechanical shaft presented in Figure \ref{Fig:ACSGS}(c), more details could be seen in \cite{FBMSR}.  
\begin{figure}[!ht]
   \centering
   \subfigure[Circuit part.]{\begin{minipage}[t]{0.3\linewidth}
   \centering
   \includegraphics[width=4cm]{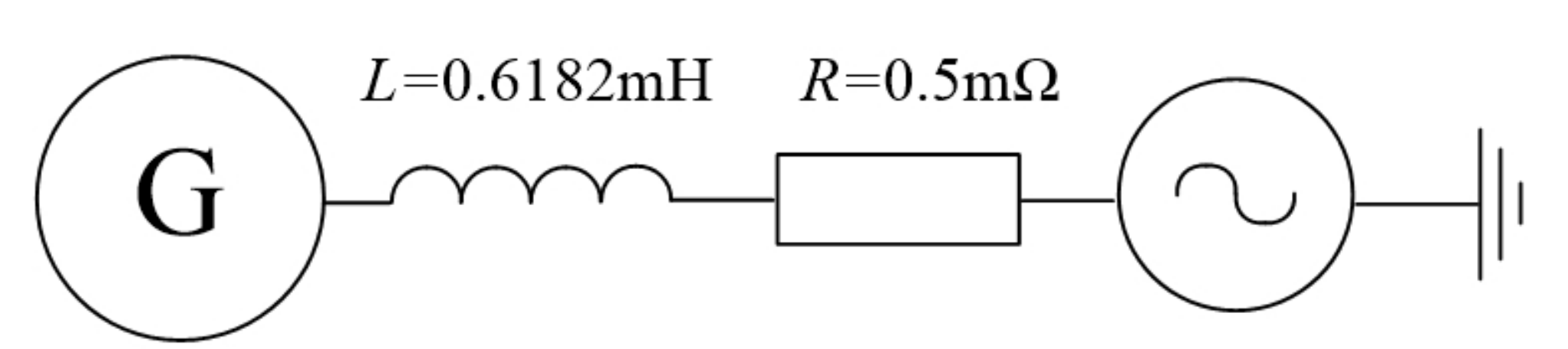}
   \end{minipage}}
   \subfigure[Synchronous generator.]{\begin{minipage}[t]{0.3\linewidth}
   \centering
   \includegraphics[width=4cm]{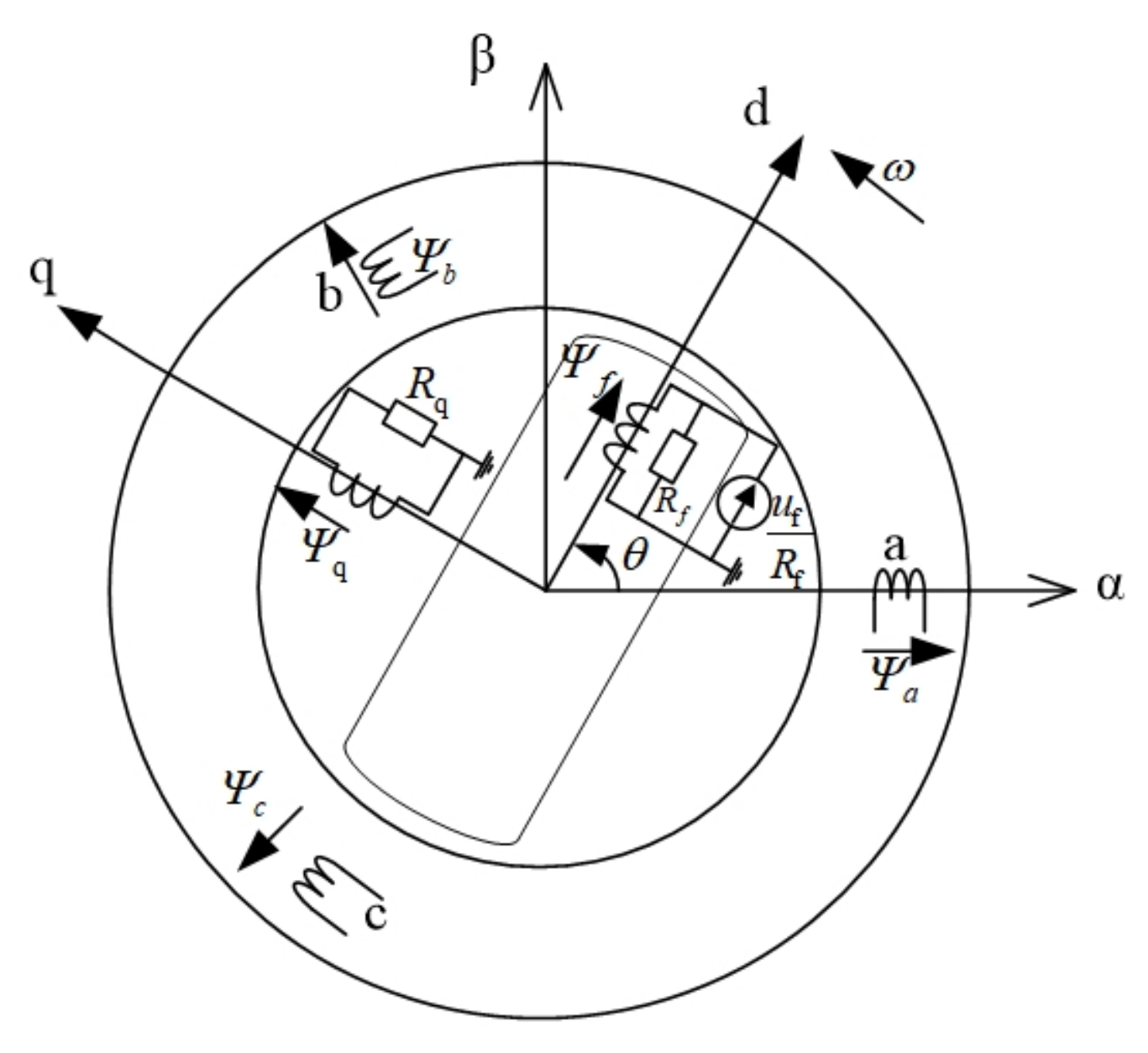}
   \end{minipage}}
    
   \subfigure[Mechanical shaft.]{\begin{minipage}[t]{0.5\linewidth}
   \centering
   \includegraphics[width=6cm]{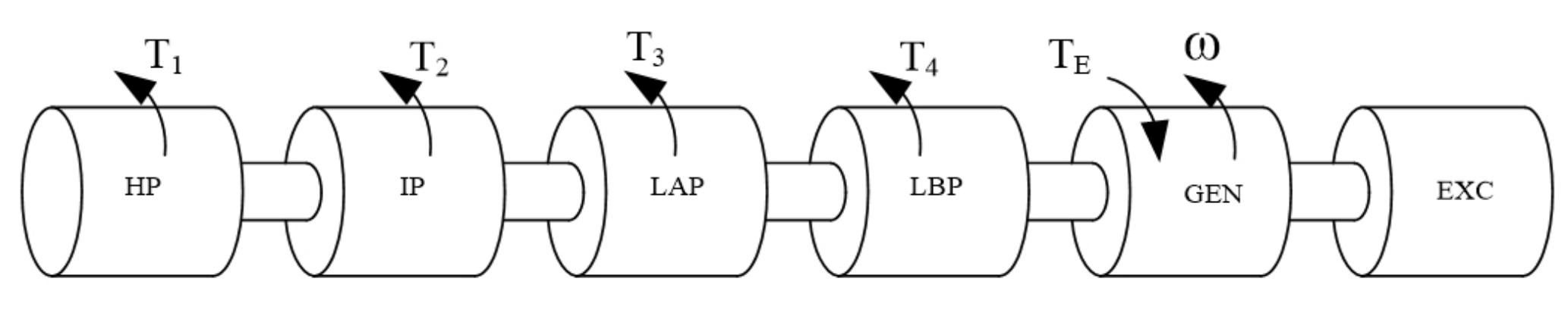}
   \end{minipage}}
   
   \caption{Synchronous generator system.}
   \label{Fig:ACSGS}
\end{figure}

\subsection{State quantities of the generator system\label{section 2.1}}
According to the modelling method presented in \cite{DMMS}, the dynamical equations of the generator system shown in Figure \ref{Fig:ACSGS} can be obtained based on the appropriate state quantities. First of all, the ideal AC voltage source in Figure \ref{Fig:ACSGS}(a) should be substituted by the equivalent Norton current source, which leads to the following circuit structure in Figure \ref{Fig:Norton}. Number the three circuit nodes by $0,1,2$ in Figure \ref{Fig:Norton}, among which the node $0$ locates at the grounding point. Notably, the voltage and the flux linkage of node $0$ vanish in this system, so they will not appear in the dynamical equations.    
\begin{figure}[!ht]
    \centering
    \includegraphics[width=5cm]{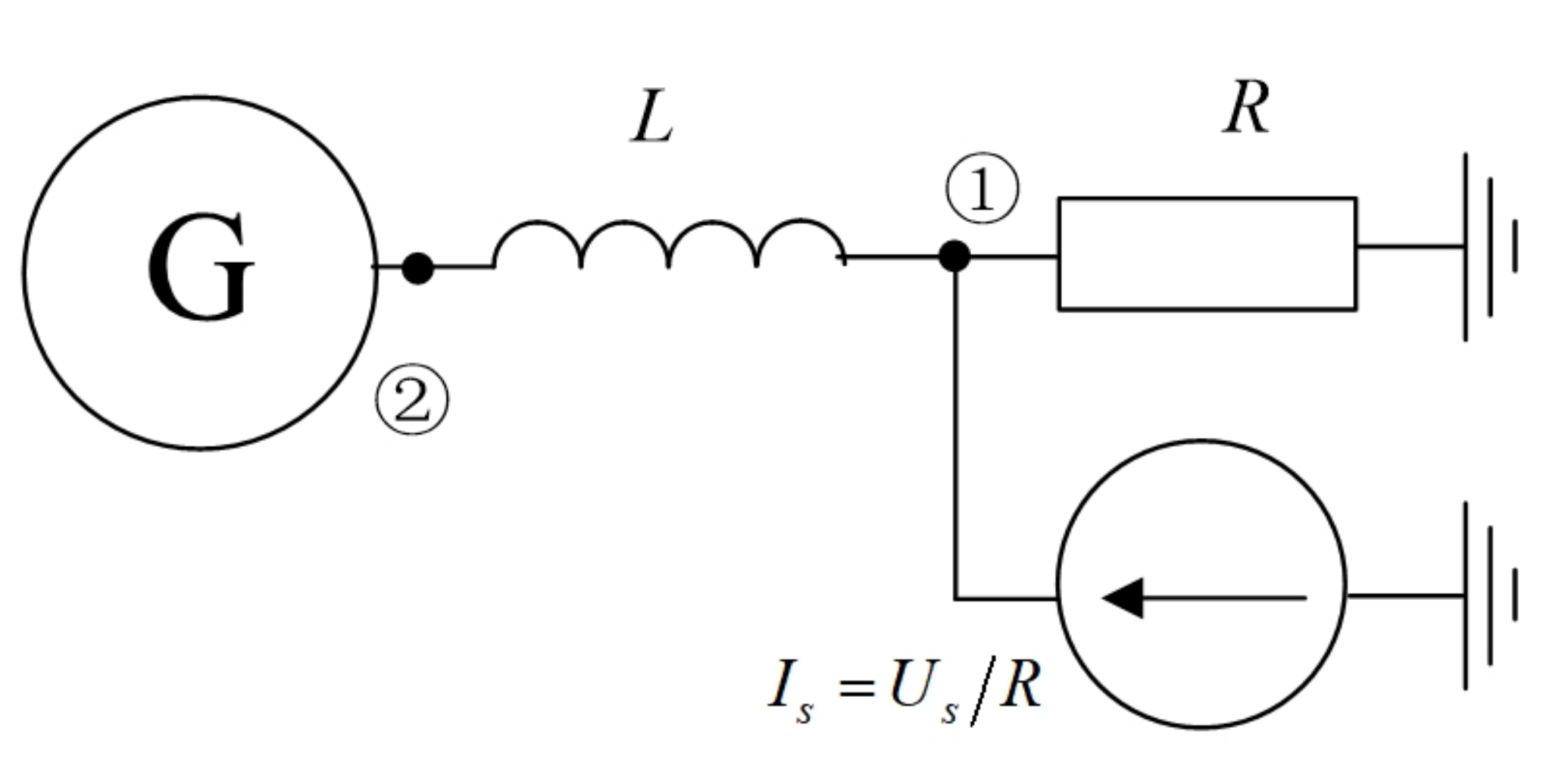}
    \caption{Equivalent model of circuit part.}
    \label{Fig:Norton}
\end{figure}

Now the dynamical equations of the generator system can be given through the flux linkages and the angular displacements who describe the state of this system. In general, there are six flux linkages associated with the two nodes $1,2$, the excitation winding and the damper winding, which can be written in vector form   
\begin{align}\label{Eq. flux}
\boldsymbol{\Psi}=(\Psi_{1\alpha},\Psi_{1\beta},\Psi_{2\alpha},\Psi_{2\beta},\Psi_f,\Psi_q)^{\top}.
\end{align}
 Here $\Psi_{1\alpha},\Psi_{1\beta}$ are two components of the flux linkage at node $1$, corresponding to the $\alpha,\beta$ directions respectively. In addition, $\Psi_{2\alpha},\Psi_{2\beta}$ have the analogous meaning to $\Psi_{1\alpha},\Psi_{1\beta}$, while $\Psi_f,\Psi_q$ are the flux linkages of the excitation winding and the damper winding. It is worth mentioning that $\dot{\boldsymbol{\Psi}}=(U_{1\alpha},U_{1\beta},U_{2\alpha},U_{2\beta},U_f,U_q)^{\top}$ is the vector composed of the voltages that reflect the system status.

As for the angular displacements, it is natural that      
\begin{align}\label{Eq. angle}
\boldsymbol{\theta}=(\theta_1,\theta_2,\theta_3,\theta_4,\theta_5,\theta_6)^{\top}   
\end{align}
plays an important role in depicting the generator system, where $\theta_i\ (i=1,\cdots,6)$ represent the angular displacements of the six mass blocks in Figure \ref{Fig:ACSGS}(c). Moreover, $\dot{\boldsymbol{\theta}}=(\omega_1,\omega_2,\omega_3,\omega_4,\omega_5,\omega_6)^{\top}$ is the vector of angular velocities. 

\subsection{Dynamical equations of the generator system\label{section 2.2}}
Regard $q=\left(\boldsymbol{\Psi};\boldsymbol{\theta}\right)$ as the generalized coordinates and $\dot{q}=\left(\dot{\boldsymbol{\Psi}};\dot{\boldsymbol{\theta}}\right)$ as the generalized velocity (here the semicolons mean that components are arrayed in column, keeping the same meaning in the remaining part of this article), then the dynamical equations of the generator system are given as follows.

Through the analysis in \cite{DTP,DMMS}, it is shown that the Lagrangian of this generator system has the following form 
$$L(\dot{\boldsymbol{\Psi}},\dot{\boldsymbol{\theta}},\boldsymbol{\Psi},\boldsymbol{\theta})=\left(\frac{1}{2}\dot{\boldsymbol{\Psi}}^{\top}K_C \dot{\boldsymbol{\Psi}}+\frac{1}{2}\dot{\boldsymbol{\theta}}^{\top} J\dot{\boldsymbol{\theta}}\right)-\left(\frac{1}{2}\boldsymbol{\Psi}^{\top}(K_L+\Gamma(\boldsymbol{\theta})) \boldsymbol{\Psi}+\frac{1}{2}\boldsymbol{\theta}^{\top} K\boldsymbol{\theta}\right),$$
where $J,K$ are the inertia matrix and the stiffness matrix of the mechanical shaft respectively, both of them are symmetric; $K_C,K_L$ are the coefficient matrices used for computing the capacitive energy and the inductive energy in the circuit part. Furthermore, $\Gamma(\boldsymbol{\theta})$ is a coefficient matrix used for calculating magnetic energy of the generator. For convenience, the specific forms of these matrices are provided here for future usage: 
\begin{align}\label{Eq. old coefficient 1}
&J=\mathrm{diag}(J_1,J_2,J_3,J_4,J_5,J_6),\ K_C=0, \notag \\
&K_L=\left(\begin{array}{cccccc}
          L^{-1} & 0 & -L^{-1} & 0 & 0 & 0 \\
          0 & L^{-1} & 0 & -L^{-1} & 0 & 0 \\
          -L^{-1} & 0 & L^{-1} & 0 & 0 & 0 \\
          0 & -L^{-1} & 0 & L^{-1} & 0 & 0 \\
          0 & 0 & 0 & 0 & 0 & 0 \\
          0 & 0 & 0 & 0 & 0 & 0 \\
          \end{array}
    \right), \notag \\
&K=\left(\begin{array}{cccccc}
K_1 & -K_1 & 0 & 0 & 0 & 0 \\
-K_1 & K_1+K_2	& -K_2 & 0 & 0 & 0 \\
0 & -K_2 & K_2+K_3 & -K_3 & 0 & 0 \\
0 & 0 & -K_3 & K_3+K_4 & -K_4 & 0 \\
0 & 0 & 0 & -K_4 & K_4+K_5 & -K_5 \\
0 & 0 & 0 & 0 & -K_5 & K_5 \\ 
\end{array}\right), \\
&\Gamma(\boldsymbol{\theta})=\frac{1}{\frac{3}{2}M^2-L_r(L_s+M_s)} \notag \\
&\left(\begin{array}{cccccc}
0 & 0 & 0 & 0 & 0 & 0 \\
0 & 0 & 0 & 0 & 0 & 0 \\
0 & 0 & -L_r & 0 & \sqrt{\frac{3}{2}}M\cos(\theta_5) & -\sqrt{\frac{3}{2}}M\sin(\theta_5)\\
0 & 0 & 0 & -L_r & \sqrt{\frac{3}{2}}M\sin(\theta_5) & \sqrt{\frac{3}{2}}M\cos(\theta_5)\\
0 & 0 & \sqrt{\frac{3}{2}}M\cos(\theta_5) & \sqrt{\frac{3}{2}}M\sin(\theta_5) & -L_s-M_s & 0\\
0 & 0 & -\sqrt{\frac{3}{2}}M\sin(\theta_5) & \sqrt{\frac{3}{2}}M\cos(\theta_5) & 0 & -L_s-M_s \\
\end{array}\right). \notag
\end{align}
Here $J_1=1166.56,J_2=1953.83,J_3=10782.84,J_4=11103.62,J_5=10906.22,J_6=429.68$ are the rotary inertia coefficients of the mechanical shaft, and $K_1=45692300.27,K_2=82680741.64,K_3=123179605.30,K_4=167728592$,\ $K_5=6679980.902$ are the stiffness factors. Moreover, $M=33.35 \mathrm{mH}$, $L_r=519 \mathrm{mH}, L_s=3 \mathrm{mH}, M_s=0.516 \mathrm{mH}$ are the parameters of the generator.

On the other hand, the Rayleigh's dissipation function of the whole system is
$$\mathcal{R}=\frac{1}{2}\dot{\boldsymbol{\Psi}}^{\top}K_R \dot{\boldsymbol{\Psi}}+\frac{1}{2}\dot{\boldsymbol{\theta}}^{\top}D \dot{\boldsymbol{\theta}}-\dot{\boldsymbol{\theta}}^{\top}T-\dot{\boldsymbol{\Psi}}^{\top}I_s(t),$$
where $K_R,D$ are the coefficient matrices used for describing the resistance loss and the friction loss respectively, and $T$ represents the vector composed of the mechanical torques $T_1,\cdots,T_4$ in Figure \ref{Fig:ACSGS}(c), $I_s(t)$ represents the vector describing the node-injected currents. For the same reason, we list the specific forms of $K_R,T,I_s(t)$ here for future reference:  
\begin{align}\label{Eq. old coefficient 2}
K_R=\mathrm{diag}\left(R^{-1},R^{-1},0,0,R_f^{-1},R_q^{-1}\right),
T=\left(\begin{array}{c}
     T_1  \\
     T_2  \\
     T_3  \\
     T_4  \\
     0    \\
     0
\end{array}
\right),
I_s(t)=\left(\begin{array}{c}
     U_s/R\cos(\omega_s t)  \\
     U_s/R\sin(\omega_s t)  \\
     0 \\
     0  \\
     U_f/R_f \\
     0
\end{array}
\right).
\end{align}
It is noteworthy that $R_f=R_q=0.1597\ \Omega, \omega_s=120\pi \ \mathrm{rad/s}$. Additionally, $T_1=601469.26$,	$T_2=521273.35,T_3=441077.45,T_4=441077.45$, and $I_f=\frac{U_f}{R_f}=3212.64\ \mathrm{A}$ is the exciting current remaining constant. 

So far, according to the Euler-Lagrange equation containing Rayleigh's dissipation function
$$\frac{\mathrm{d}}{\mathrm{d}t}\left(\frac{\partial L}{\partial \dot{q}}\right)-\frac{\partial L}{\partial q}+\frac{\partial \mathcal{R}}{\partial \dot{q}}=0,$$
the dynamical equations of the generator system can be derived, which read  
\begin{align}\label{Eq. old system}
\left\{\begin{aligned}
&K_C\ddot{\boldsymbol{\Psi}}+K_R\dot{\boldsymbol{\Psi}}+(K_L+\Gamma(\boldsymbol{\theta}))\boldsymbol{\Psi}=I_s(t), \\
&J\ddot{\boldsymbol{\theta}}+D\dot{\boldsymbol{\theta}}+K\boldsymbol{\theta}+\frac{1}{2}\boldsymbol{\Psi}^{\top} \frac{\partial\Gamma(\boldsymbol{\theta})}{\partial \boldsymbol{\theta}} \boldsymbol{\Psi}=T,
\end{aligned}\right.
\end{align}
here $\boldsymbol{\Psi}^{\top} \frac{\partial\Gamma(\boldsymbol{\theta})}{\partial \boldsymbol{\theta}} \boldsymbol{\Psi}:=\left(0,0,0,0,\boldsymbol{\Psi}^{\top} \frac{\partial\Gamma(\boldsymbol{\theta})}{\partial \theta_5} \boldsymbol{\Psi},0\right)^{\top}$ is an abbreviation, since $\Gamma(\boldsymbol{\theta})$ actually only depends on $\theta_5$. 

\section{Predictor-corrector methods\label{section 3}}
In this section, we present predictor-corrector methods (abbreviated to P-C methods) for the generator system, basing on the actual physical phenomena. Inspired by the reality that the rotary speeds of six mass blocks in Figure \ref{Fig:ACSGS}(c) vary slowly within a time step of the electromagnetic transient calculation, P-C methods can be proposed by applying Euler method to the vector of angles. To derive these methods, we firstly transform the dynamical equations (\ref{Eq. old system}) into 
\begin{align}\label{Eq. system for P-C}
\left\{\begin{aligned}
&K_{E_1}\dot{x}_E=-K_{E_2}(\boldsymbol{\theta})x_E+g_E(t),\\     
&K_{M_1}\dot{x}_M=-K_{M_2}x_M+g_M\left(\boldsymbol{\Psi},\boldsymbol{\theta}\right),
\end{aligned}
\right.     
\end{align}
where $x_E=\left(\dot{\boldsymbol{\Psi}}; \boldsymbol{\Psi}\right),\ x_M=\left(\dot{\boldsymbol{\theta}}; \boldsymbol{\theta}\right)$ and 
\begin{align}\label{Eq. K_E1, K_E2, K_M1, K_M2}
\begin{split}
&K_{E_1}=\mathrm{diag}(K_C,I_6),\ K_{E_2}(\boldsymbol{\theta})
                            =\left(\begin{array}{cc}
                            K_R & K_L+\Gamma(\boldsymbol{\theta}) \\
                            -I_6 & 0 
                            \end{array}\right),\      
g_E(t)=\left(\begin{array}{c} I_s(t) \\ 0 \end{array}\right), \\
&K_{M_1}=\mathrm{diag}(J,I_6),\ K_{M_2}=\left(\begin{array}{cc}
                                           D & K \\
                                           -I_6 & 0 
                                           \end{array}\right),\ 
g_M\left(\boldsymbol{\Psi},\boldsymbol{\theta}\right)=\left(\begin{array}{c}
T-\frac{1}{2}\boldsymbol{\Psi}^{\top} \frac{\partial \Gamma(\boldsymbol{\theta})}{\partial \boldsymbol{\theta}} \boldsymbol{\Psi} \\ 0 \end{array}\right)
\end{split}
\end{align}
with the matrices $J,K,D,\Gamma(\boldsymbol{\theta})$ and vector $T$ defined in Section \ref{section 2.2}. Let $x_{E,n}=\left(\dot{\boldsymbol{\Psi}}_n; \boldsymbol{\Psi}_n\right)$, $x_{M,n}=\left(\dot{\boldsymbol{\theta}}_n; \boldsymbol{\theta}_n\right)$ be the numerical solution to (\ref{Eq. system for P-C}) at $t_n=nh$, where $h$ represents the time step. After this, we calculate the numerical solution at $t_{n+1}=(n+1)h$ on the basis of the former solution.

Applying trapezoidal rule to the first equation of (\ref{Eq. system for P-C}) yields

\begin{align*}
K_{E_1}x_{E,n+1}=
&K_{E_1}x_{E,n}+\frac{h}{2}K_{E_1}\left(\dot{x}_{E,n}+\dot{x}_{E,n+1}\right) \\
=&K_{E_1}x_{E,n}+\frac{h}{2}\left[\left(-K_{E_2}(\boldsymbol{\theta}_n)x_{E,n}+g_E(t_n)\right)\right. \\
&\left.+\left(-K_{E_2}(\boldsymbol{\theta}_{n+1})x_{E,n+1}+g_E(t_{n+1})\right)
\right], \\
\end{align*}
which is equivalent to the following formula
\begin{align}\label{Eq. Trapezoidal rule}
\begin{split}
&\left[K_{E_1}+\frac{h}{2}K_{E_2}(\boldsymbol{\theta}_{n+1})\right]x_{E,n+1} \\
=&\left[K_{E_1}-\frac{h}{2}K_{E_2}(\boldsymbol{\theta}_n)\right]x_{E,n}+\frac{h}{2}\left(g_E(t_n)+g_E(t_{n+1})\right).  
\end{split}  
\end{align}
Therefore, it is obvious that only when $\boldsymbol{\theta}_{n+1}$ is known can $x_{E,n+1}$ be directly calculated. For the reason of this, we predict $\boldsymbol{\theta}_{n+1}$ with the assistance of forward Euler method, which gives  
\begin{align}\label{Eq. predictor}
\boldsymbol{\theta}_{n+1}^{[0]}=\boldsymbol{\theta}_n+h\dot{\boldsymbol{\theta}}_n.    
\end{align}
Combine (\ref{Eq. Trapezoidal rule}) and (\ref{Eq. predictor}), we obtain the formula to calculate $x_{E,n+1}$ as below 
\begin{align}\label{Eq. predictor Psi}
\begin{split}
&\left[K_{E_1}+\frac{h}{2}K_{E_2}\left(\boldsymbol{\theta}_{n+1}^{[0]}\right)\right]x_{E,n+1} \\
=&\left[K_{E_1}-\frac{h}{2}K_{E_2}(\boldsymbol{\theta}_n)\right]x_{E,n}+\frac{h}{2}\left(g_E(t_n)+g_E(t_{n+1})\right). 
\end{split}
\end{align}
Similarly, applying trapezoidal rule to the second equation of (\ref{Eq. system for P-C}) yields
\begin{align*}
\begin{split}
&\left(K_{M_1}+\frac{h}{2}K_{M_2}\right)x_{M,n+1} \\
=&\left(K_{M_1}-\frac{h}{2}K_{M_2}\right)x_{M,n}+\frac{h}{2}\left(g_M\left(\boldsymbol{\Psi}_n,\boldsymbol{\theta}_n\right)+g_M\left(\boldsymbol{\Psi}_{n+1},\boldsymbol{\theta}_{n+1}\right)\right),  \end{split}
\end{align*}
then different selections for evaluating $g_M\left(\boldsymbol{\Psi}_{n+1},\boldsymbol{\theta}_{n+1}\right)$ will lead to different methods to correct the numerical solution $\boldsymbol{\theta}_{n+1}^{[0]}$. As a result, F. Ji and C. Lin proposed the following P-C method (I) by taking $g_M\left(\boldsymbol{\Psi}_{n+1},\boldsymbol{\theta}_{n+1}\right)\approx g_M\left(\boldsymbol{\Psi}_n,\boldsymbol{\theta}_n\right)$:
\begin{subequations}
\begin{align}
(i)&\ \boldsymbol{\theta}_{n+1}^{[0]}=\boldsymbol{\theta}_n+h\dot{\boldsymbol{\theta}}_n, \label{Eq. P-C I a}\\
(ii)&\left[K_{E_1}+\frac{h}{2}K_{E_2}\left(\boldsymbol{\theta}_{n+1}^{[0]}\right)\right]x_{E,n+1} \notag \\
=&\left[K_{E_1}-\frac{h}{2}K_{E_2}(\boldsymbol{\theta}_n)\right]x_{E,n}+\frac{h}{2}\left(g_E(t_n)+g_E(t_{n+1})\right), \label{Eq. P-C I b}\\
(iii)&\left(K_{M_1}+\frac{h}{2}K_{M_2}\right)x_{M,n+1}
=\left(K_{M_1}-\frac{h}{2}K_{M_2}\right)x_{M,n}+hg_M\left(\boldsymbol{\Psi}_n,\boldsymbol{\theta}_n\right). \label{Eq. P-C I c}     
\end{align}    
\end{subequations}
On the other side, since $\boldsymbol{\theta}_{n+1}^{[0]},\boldsymbol{\Psi}_{n+1}$ have been attained from (\ref{Eq. P-C I a}) and (\ref{Eq. P-C I b}), respectively, there exists a more reasonable approach to evaluating $g_M\left(\boldsymbol{\Psi}_{n+1},\boldsymbol{\theta}_{n+1}\right)$ with the assistance of $\boldsymbol{\Psi}_{n+1},\boldsymbol{\theta}_{n+1}^{[0]}$, which gives 
\begin{align}\label{Eq. P-C II}
\begin{split}
&\left(K_{M_1}+\frac{h}{2}K_{M_2}\right)x_{M,n+1} \\
=&\left(K_{M_1}-\frac{h}{2}K_{M_2}\right)x_{M,n}+\frac{h}{2}\left(g_M\left(\boldsymbol{\Psi}_n,\boldsymbol{\theta}_n\right)+g_M\left(\boldsymbol{\Psi}_{n+1},\boldsymbol{\theta}_{n+1}^{[0]}\right)\right).    
\end{split}    
\end{align}
Therefore, we obtain another numerical method composed of (\ref{Eq. P-C I a}), (\ref{Eq. P-C I b}) and (\ref{Eq. P-C II}), and we call it P-C method (II). 

Subsequent numerical experiments will illustrate the efficiency and accuracy of these P-C methods, which indicates that prior physical facts are beneficial to construct effective numerical methods.

\section{Structure-preserving methods \label{section 4}}
In this section, we reformulate the generator system from the perspective of structure-preserving dynamical systems, and  construct its structure-preserving algorithm. Due to the energy dissipation in generator system caused by resistance loss and friction loss, it is not suitable to model this system using the symplectic structure of Hamiltonian systems. Therefore, we turn to a more generalized framework known as the port-Hamiltonian descriptor system introduced in \cite{pHDAE},  which is able to account for the energy dissipation effects and is associated with a Dirac structure. We will show that the generator system can be reformulated as a port-Hamiltonian descriptor system, and thereby employ structure-preserving methods.

\subsection{Port-Hamiltonian form for the generator system\label{section 4.1}}
Recall that the coefficient matrices $K_C=0$ and $K_R$ is singular (see (\ref{Eq. old coefficient 1})$\sim$(\ref{Eq. old coefficient 2})), so $\dot{\boldsymbol{\Psi}}$ can not be directly expressed by $t,\boldsymbol{\Psi},\boldsymbol{\theta}$ from the first equation of (\ref{Eq. old system}). As a consequence, it is worth trying to reformulate the dynamical equations (\ref{Eq. old system}) to eliminate the singularity of $K_R$. Observation indicates that the derivatives of $\Psi_{2\alpha},\Psi_{2\beta}$ are not involved in these equations, so it is natural to express $\Psi_{2\alpha},\Psi_{2\beta}$ by other variables, which leads to
\begin{align}\label{Eq. transfer}
\begin{split}
&\Psi_{2\alpha}=\lambda_1 \Psi_{1\alpha}+\lambda_2 (-\Psi_f \cos\theta_5+\Psi_q \sin\theta_5),\\
&\Psi_{2\beta}=\lambda_1 \Psi_{1\beta}+\lambda_2 (-\Psi_f \sin\theta_5-\Psi_q \cos\theta_5), \end{split}
\end{align}
where the specific forms of the coefficients in (\ref{Eq. transfer}) are $\lambda_1=\frac{\frac{3}{2}M^2-L_r(L_s+M_s)}{\frac{3}{2}M^2-L_r(L_s+M_s+L)}, \lambda_2=\frac{\sqrt{\frac{3}{2}}ML}{\frac{3}{2}M^2-L_r(L_s+M_s+L)}$. Insert (\ref{Eq. transfer}) into (\ref{Eq. old system}) and take
\begin{align*}
\widetilde{\boldsymbol{\Psi}}=(\Psi_{1\alpha},\Psi_{1\beta},\Psi_f,\Psi_q)^{\top}   
\end{align*}
as the new vector of flux linkages, then we obtain 
\begin{align}\label{Eq. new system}
\left\{\begin{aligned}
&\widetilde{K}_R \dot{\widetilde{\boldsymbol{\Psi}}}+\left(\widetilde{K}_L+\widetilde{\Gamma}(\boldsymbol{\theta})\right) \widetilde{\boldsymbol{\Psi}}=\widetilde{I}_s(t),\\     
&J\ddot{\boldsymbol{\theta}}+D\dot{\boldsymbol{\theta}}+K\boldsymbol{\theta}+\frac{1}{2}\widetilde{\boldsymbol{\Psi}}^{\top} \frac{\partial \widetilde{\Gamma}(\boldsymbol{\theta})}{\partial \boldsymbol{\theta}} \widetilde{\boldsymbol{\Psi}}=T,
\end{aligned}
\right.    
\end{align}
where $\widetilde{\boldsymbol{\Psi}}^{\top} \frac{\partial \widetilde{\Gamma}(\boldsymbol{\theta})}{\partial \boldsymbol{\theta}} \widetilde{\boldsymbol{\Psi}}=\left(0,0,0,0,\widetilde{\boldsymbol{\Psi}}^{\top} \frac{\partial \widetilde{\Gamma}(\boldsymbol{\theta})}{\partial \theta_5} \widetilde{\boldsymbol{\Psi}},0\right)^{\top}$, and
\begin{align}\label{Eq. new coefficient}
\begin{split}
&\widetilde{K}_R=\mathrm{diag}\left(R^{-1},R^{-1},R_f^{-1},R_q^{-1}\right),\widetilde{I}_s(t)=\left(\frac{U_s}{R}\cos(\omega_s t),\frac{U_s}{R}\sin(\omega_s t),\frac{U_f}{R_f},0\right)^{\top},\\ 
&\widetilde{K}_L=\frac{\mathrm{diag}\left(-L_r,-L_r,-(L_s+M_s+L),-(L_s+M_s+L)\right)}{\frac{3}{2}M^2-L_r(L_s+M_s+L)},\\
&\widetilde{\Gamma}(\boldsymbol{\theta})=\frac{\sqrt{\frac{3}{2}}M}{\frac{3}{2}M^2-L_r(L_s+M_s+L)}\left(\begin{array}{cccc}
    0 & 0 & \cos(\theta_5) & -\sin(\theta_5) \\
    0 & 0 & \sin(\theta_5) & \cos(\theta_5)  \\
    \cos(\theta_5) & \sin(\theta_5) & 0 & 0  \\
    -\sin(\theta_5) & \cos(\theta_5) & 0 & 0 \\
\end{array}\right).
\end{split}    
\end{align}

Let 
\begin{align}\label{Eq. state, input, output}
x(t)=\left(\begin{array}{c} \dot{\widetilde{\boldsymbol{\Psi}}} \\ \widetilde{\boldsymbol{\Psi}} \\ \dot{\boldsymbol{\theta}} \\ \boldsymbol{\theta} \\ t \end{array}\right),\ u(x)=\left(\begin{array}{c} \widetilde{I}_s(t) \\ 0 \\ T \\ 0 \\ 1
\end{array}\right),\ y(x)=\left(\begin{array}{c}
\dot{\widetilde{\boldsymbol{\Psi}}} \\ 0 \\ \dot{\boldsymbol{\theta}} \\ 0 \\ 0 \end{array}\right) 
\end{align}
be the state, input and output, respectively, then the generator system (\ref{Eq. new system}) can be written in the the following form
\begin{align}\label{Eq. pHDAE}
\begin{split}
M\dot{x}&=(P-Q)z(x)+(N-V)u(x), \\
y&=(N+V)^{\top}z(x)+(S-W)u(x),
\end{split}
\end{align}
where the coefficient matrices are given as
\begin{align}\label{Eq. matrices of pHDS}
\begin{split}
&M=\mathrm{diag}\left(0,\widetilde{K}_L,J,K,1\right),\ N=\mathrm{diag}(I_4,0,I_6,0,1),\ V=S=W=0, \\  
&P=\left(\begin{array}{ccccc}
    0 & -\widetilde{K}_L & 0 & 0 & 0 \\
    \widetilde{K}_L & 0 & 0 & 0 & 0 \\
    0 & 0 & 0 & -K & 0 \\
    0 & 0 & K & 0 & 0 \\
    0 & 0 & 0 & 0 & 0
\end{array}\right),\ Q=\mathrm{diag}\left(\widetilde{K}_R,0,D,0,0\right)
\end{split}    
\end{align}
and 
\begin{align}\label{Eq. z}
z(x)=\left(\begin{array}{c}
\dot{\widetilde{\boldsymbol{\Psi}}} \\ \widetilde{K}_L^{-1}\left(\widetilde{K}_L+\widetilde{\Gamma}(\boldsymbol{\theta})\right)\widetilde{\boldsymbol{\Psi}} \\
\dot{\boldsymbol{\theta}} \\
K^{-1}\left(K\boldsymbol{\theta}+\frac{1}{2}\widetilde{\boldsymbol{\Psi}}^{\top} \frac{\partial \widetilde{\Gamma}(\boldsymbol{\theta})}{\partial \boldsymbol{\theta}} \widetilde{\boldsymbol{\Psi}}\right) \\ 0
\end{array}\right).
\end{align}
In addition, by regarding $\left(\widetilde{\boldsymbol{\Psi}};\boldsymbol{\theta}\right)$ as the generalized coordinates and $\left(\dot{\widetilde{\boldsymbol{\Psi}}};\dot{\boldsymbol{\theta}}\right)$ as the generalized velocity, system (\ref{Eq. pHDAE}) has a Hamiltonian function
\begin{align}\label{Eq. new Hamiltonian}
\mathcal{H}(x)=\frac{1}{2}\dot{\boldsymbol{\theta}}^{\top} J\dot{\boldsymbol{\theta}}+\frac{1}{2}\widetilde{\boldsymbol{\Psi}}^{\top}(\widetilde{K}_L+\widetilde{\Gamma}(\boldsymbol{\theta})) \widetilde{\boldsymbol{\Psi}}+\frac{1}{2}\boldsymbol{\theta}^{\top} K\boldsymbol{\theta}.
\end{align}
Moreover, we can readily check that the matrix functions
\begin{align}\label{Eq. matrix functions}
\Xi:=\left(\begin{array}{cc}
    P & N \\
    -N^{\top} & W 
\end{array}\right),\ 
\Lambda:=\left(\begin{array}{cc}
    Q & V \\
    V^{\top} & S 
\end{array}\right)\ \text{satisfy $\Xi=-\Xi^{\top}$ and $\Lambda=\Lambda^{\top}\geq 0$};
\end{align}
and 
\begin{equation}\label{Eq. hami function}
\nabla_x \mathcal{H}=M^{\top}z. 
\end{equation}
In this way, according to \cite[Definition 1]{pHDAE}, the generator system (\ref{Eq. new system}) is actually an autonomous port-Hamiltonian descriptor system, which is associated with a Dirac structure. 
\begin{remark}\label{Rmk. PBE}
In fact, two necessary conditions (\ref{Eq. matrix functions}) and (\ref{Eq. hami function}) result in a power balance equation of the Hamiltonian function (\ref{Eq. new Hamiltonian})
\begin{align*}
\frac{\mathrm{d}}{\mathrm{d}t}\mathcal{H}(x(t))=-\left(\begin{array}{c} z \\ u \end{array}\right)^{\top}\Lambda \left(\begin{array}{c} z \\ u \end{array}\right)+y^{\top}u\leq y^{\top}u,  
\end{align*}
which holds along any solution $x(t)$ and for any input $u(x)$. Additionally, there exists an inequality
$$\mathcal{H}(x(t_f))-\mathcal{H}(x(t_0))\leq \int_{t_0}^{t_f} y(\tau)^{\top}u(\tau)d\tau,$$
which could be implemented to evaluate the energy dissipation. 
\end{remark}

\subsection{Structure-preserving methods\label{section 4.2}}
Now we consider to apply Runge-Kutta methods to the generator system in port-Hamiltonian form, yielding that
\begin{align}\label{Eq. collocation for system}
\begin{split}
Mk_i&=(P-Q)z\left(x_0+h\sum\limits_{j=1}^s a_{ij}k_j\right)+(N-V)u\left(x_0+h\sum\limits_{j=1}^s a_{ij}k_j\right), \\
x_f&=x_0+h\sum\limits_{j=1}^s b_j k_j.
\end{split}
\end{align}
Here we employ the collocation method with the coefficients $a_{ij},b_i$ taking the form
\begin{align}\label{Eq. collocation coefficient}
a_{ij}=\int_0^{\gamma_i}\ell_j(\tau)d\tau,\ b_i=\int_0^1\ell_i(\tau)d\tau,    
\end{align}
where $\ell_i(\tau)=\prod\limits_{l=1\atop l\neq i}^s \frac{\tau-\gamma_l}{\gamma_i-\gamma_l}$ is the Lagrange interpolation polynomial, and $\gamma_1,\cdots,\gamma_s$ are distinct real numbers located in $[0,1]$. In particular, if $\gamma_1,\cdots,\gamma_s$ are the zero points of the $s$-th shifted Legendre polynomial
\begin{equation*}
\frac{\mathrm{d}^s}{\mathrm{d}x^s}\bigg(x^s(x-1)^s\bigg),    
\end{equation*}
then we obtain $s$-stage Gauss method. Here we list Gauss methods for $s=1,2,3$ in the following Table \ref{Tab:Gauss methods} as instance. 
\begin{table}[ht]
\caption{Gauss methods for $s=1,2,3$.}
\label{Tab:Gauss methods}
\renewcommand{\arraystretch}{1.5}
\centering
\subtable[$s=1$]{
   \centering
   \begin{tabular}{c|c}
   $\frac{1}{2}$ & $\frac{1}{2}$ \\
   \hline
               &    1
   \end{tabular}
}  
\subtable[$s=2$]{
\centering
   \begin{tabular}{c|cc}
   $\frac{1}{2}-\frac{\sqrt{3}}{6}$ & $\frac{1}{4}$ & $\frac{1}{4}-\frac{\sqrt{3}}{6}$ \\
   $\frac{1}{2}+\frac{\sqrt{3}}{6}$ & $\frac{1}{4}+\frac{\sqrt{3}}{6}$ & $\frac{1}{4}$ \\
   \hline
               &   $\frac{1}{2}$ & $\frac{1}{2}$
   \end{tabular}    
}
\subtable[$s=3$]{
\centering
   \begin{tabular}{c|ccc}
   $\frac{1}{2}-\frac{\sqrt{15}}{10}$ & $\frac{5}{36}$ & $\frac{2}{9}-\frac{\sqrt{15}}{15}$ & $\frac{5}{36}-\frac{\sqrt{15}}{30}$\\
   $\frac{1}{2}$ & $\frac{5}{36}+\frac{\sqrt{15}}{24}$ & $\frac{2}{9}$ & $\frac{5}{36}-\frac{\sqrt{15}}{24}$\\
   $\frac{1}{2}+\frac{\sqrt{15}}{10}$ & $\frac{5}{36}+\frac{\sqrt{15}}{30}$ & $\frac{2}{9}+\frac{\sqrt{15}}{15}$ & $\frac{5}{36}$\\
   \hline
               &  $\frac{5}{18}$ & $\frac{4}{9}$ & $\frac{5}{18}$
   \end{tabular}    
}
\end{table}

Notice that elements in the first four rows of $M$ given by (\ref{Eq. matrices of pHDS}) are all zeros, which means the generator system in port-Hamiltonian form is actually a differential-algebraic system, and $k_i$ can not be directly expressed by $k_j\ (j=1,\cdots,s)$ from the first equation in (\ref{Eq. collocation for system}). In this case, the global error of (\ref{Eq. collocation for system}) can be deduced by the theory of differential-algebraic equations. For instance, we have the following theorem aiming at Gauss methods.
\begin{theorem}\label{Thm. global error of Gauss method}
Apply $s$-stage Gauss method to the generator system (\ref{Eq. pHDAE}), whose specific form is (\ref{Eq. collocation for system}). Assume that the initial values are consistent, then the global error of the numerical solution given by (\ref{Eq. collocation for system}) satisfies
\begin{align*}
&\widetilde{\boldsymbol{\Psi}}_n-\widetilde{\boldsymbol{\Psi}}(t_n)=\mathcal{O}(h^{2s}),\ 
\dot{\boldsymbol{\theta}}_n-\dot{\boldsymbol{\theta}}(t_n)=\mathcal{O}(h^{2s}),\
\boldsymbol{\theta}_n-\boldsymbol{\theta}(t_n)=\mathcal{O}(h^{2s}), \\
&\dot{\widetilde{\boldsymbol{\Psi}}}_n-\dot{\widetilde{\boldsymbol{\Psi}}}(t_n)=\left\{ \begin{aligned}
   &\mathcal{O}(h^s), && \hbox{for even}\ s,  \\
   &\mathcal{O}(h^{s+1}), && \hbox{for odd}\ s.
\end{aligned}\right.   
\end{align*}
for $t_n=nh\leq T_m$.
\end{theorem}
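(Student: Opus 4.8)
The plan is to recognize the port-Hamiltonian descriptor form (\ref{Eq. pHDAE}) as a \emph{semi-explicit, index-$1$} differential-algebraic system, to check that the collocation scheme (\ref{Eq. collocation for system}) acts on it in the standard way, and then to split the error analysis into a differential part handled by classical Runge--Kutta convergence and an algebraic part governed by the $R(\infty)$-dependent recursion typical of index-$1$ DAEs.

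First I would unfold (\ref{Eq. pHDAE}) row by row. Since the first four rows of $M$ vanish and $K_C=0$, the first block is the algebraic relation $\widetilde{K}_R\dot{\widetilde{\boldsymbol{\Psi}}}+(\widetilde{K}_L+\widetilde{\Gamma}(\boldsymbol{\theta}))\widetilde{\boldsymbol{\Psi}}=\widetilde{I}_s(t)$, whereas the remaining blocks read $\tfrac{\mathrm d}{\mathrm dt}\widetilde{\boldsymbol{\Psi}}=\dot{\widetilde{\boldsymbol{\Psi}}}$, $J\tfrac{\mathrm d}{\mathrm dt}\dot{\boldsymbol{\theta}}=-D\dot{\boldsymbol{\theta}}-K\boldsymbol{\theta}-\tfrac12\widetilde{\boldsymbol{\Psi}}^{\top}\tfrac{\partial\widetilde{\Gamma}}{\partial\boldsymbol{\theta}}\widetilde{\boldsymbol{\Psi}}+T$, $\tfrac{\mathrm d}{\mathrm dt}\boldsymbol{\theta}=\dot{\boldsymbol{\theta}}$. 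Writing $y=(\widetilde{\boldsymbol{\Psi}};\dot{\boldsymbol{\theta}};\boldsymbol{\theta};t)$ and $z=\dot{\widetilde{\boldsymbol{\Psi}}}$, this has the form $y'=f(y,z)$, $0=g(y,z)$ with $\partial g/\partial z=\widetilde{K}_R$ constant and invertible, so the index is $1$ and the constraint defines a smooth map $z=G(y)$ ($f,g,G\in C^{\infty}$, being polynomial in $\widetilde{\boldsymbol{\Psi}},\boldsymbol{\theta}$ and trigonometric in $\theta_5,t$); consistency of the initial values means $g(y_0,z_0)=0$, so there is no initial layer. I would then check that the first block of $Mk_i=\dots$ forces $g(Y_i,Z_i)=0$, i.e. $Z_i=G(Y_i)$, while the $y$-block of $k_i$ is fully determined, so that the update of $(\widetilde{\boldsymbol{\Psi}}_n;\dot{\boldsymbol{\theta}}_n;\boldsymbol{\theta}_n)$ is \emph{exactly} the $s$-stage Gauss method for the reduced ODE $y'=f(y,G(y))$.

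The differential estimates then follow from classical Runge--Kutta theory: the reduced ODE has a $C^{\infty}$ right-hand side and a solution on $[0,T_m]$, and the $s$-stage Gauss method has order $p=2s$, giving $\widetilde{\boldsymbol{\Psi}}_n-\widetilde{\boldsymbol{\Psi}}(t_n)=\mathcal O(h^{2s})$, $\dot{\boldsymbol{\theta}}_n-\dot{\boldsymbol{\theta}}(t_n)=\mathcal O(h^{2s})$, $\boldsymbol{\theta}_n-\boldsymbol{\theta}(t_n)=\mathcal O(h^{2s})$, and, via stage order $q=s$ together with this bound, $Y_{n,i}-y(t_n+\gamma_i h)=\mathcal O(h^{s+1})$, hence $Z_{n,i}-z(t_n+\gamma_i h)=\mathcal O(h^{s+1})$. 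For the algebraic component I would use that $A=(a_{ij})$ is invertible; eliminating $k_i$ from the first block gives the recursion $\dot{\widetilde{\boldsymbol{\Psi}}}_{n+1}=R(\infty)\,\dot{\widetilde{\boldsymbol{\Psi}}}_n+\sum_i d_i Z_{n,i}$ with $d^{\top}=b^{\top}A^{-1}$, $\sum_i d_i=1-R(\infty)$, and $R(\infty)=1-b^{\top}A^{-1}\mathbf 1=(-1)^s$ (the value at infinity of the Gauss stability function, the diagonal Padé approximant of $e^z$). Inserting $z(t_n)=G(y(t_n))$ into the same recursion and using the collocation simplifying conditions $C(s)$, $B(2s)$ of Gauss shows the per-step defect is $h^{s+1}\Phi(t_n)+\mathcal O(h^{s+2})$ with $\Phi$ smooth (for $s=1$ one additionally needs the asymptotic expansion $y_n-y(t_n)=h^2 e(t_n)+\mathcal O(h^3)$ of the Runge--Kutta global error so that the stage-feedback term has the same form). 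Writing $\delta_n=\dot{\widetilde{\boldsymbol{\Psi}}}_n-\dot{\widetilde{\boldsymbol{\Psi}}}(t_n)$, one obtains $\delta_{n+1}=R(\infty)\delta_n+h^{s+1}\Phi(t_n)+\mathcal O(h^{s+2})$ with $\delta_0=0$; when $s$ is even, $R(\infty)=1$ and summing over $\mathcal O(h^{-1})$ steps yields $\delta_n=\mathcal O(h^s)$, while when $s$ is odd, $R(\infty)=-1$ and an Abel summation, pairing consecutive terms via $\Phi(t_k)-\Phi(t_{k+1})=\mathcal O(h)$, recovers the extra power and yields $\delta_n=\mathcal O(h^{s+1})$ --- exactly the stated dichotomy.

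The main obstacle is this last step. Because $|R(\infty)|=1$, the recursion for $\delta_n$ has no built-in contraction, so the local defects are not damped: the even case must settle for the accumulation bound $\mathcal O(h^s)$, and the improvement to $\mathcal O(h^{s+1})$ in the odd case is genuine and depends on two things --- (i) isolating a \emph{smooth} leading term $h^{s+1}\Phi(t_n)$ in the one-step defect, which forces one to track how the already-committed $y$-error re-enters the $z$-update and to invoke the global error expansion of the Gauss method precisely in the borderline case $s=1$; and (ii) the telescoping of the alternating sum $\sum_k(-1)^{\,n-1-k}h^{s+1}\Phi(t_k)$. The remaining ingredients (the index-$1$ reduction, the equivalence of the stage system with Gauss on the reduced ODE, the classical RK order, and $R(\infty)=(-1)^s$) are routine.
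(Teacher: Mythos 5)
Your proposal is correct and rests on the same logical skeleton as the paper's proof: recast (\ref{Eq. pHDAE}) as a semi-explicit index-$1$ DAE with algebraic variable $\dot{\widetilde{\boldsymbol{\Psi}}}$ (using that $\widetilde{g}_{\widetilde{y}}=\widetilde{K}_R$ is constant and invertible), get order $2s$ for the differential components from the classical Gauss order, and let the value $\rho=R(\infty)=(-1)^s$ decide between $\mathcal{O}(h^{s})$ and $\mathcal{O}(h^{s+1})$ for the algebraic component. The difference lies in where the work is done. The paper treats the convergence dichotomy as a black box --- it cites \cite[Theorem 3.1]{NSDAS} for the statement that $-1\leq\rho<1$ yields $\mathcal{O}(h^{q+1})$ while $\rho=1$ yields only $\mathcal{O}(h^{q})$ --- and instead spends its effort on a self-contained computation of $\rho$ via the shifted Legendre basis, the tridiagonal matrix $X_G=G^{-1}AG$, and the explicit evaluation of $\det X_G=s!/(2s)!$ and of $(X_G^{-1})_{11}$. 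You do the opposite: you take $\rho=(-1)^s$ for granted from the $(s,s)$-Pad\'{e} characterization of the Gauss stability function (a route the paper explicitly acknowledges as valid before choosing its elementary alternative), and you unpack the cited convergence theorem, deriving the recursion $\delta_{n+1}=R(\infty)\delta_n+h^{s+1}\Phi(t_n)+\mathcal{O}(h^{s+2})$ and carrying out the plain summation for $\rho=1$ versus the Abel/telescoping summation for $\rho=-1$. Your version makes visible exactly why the even and odd cases separate and correctly flags the two delicate points (smoothness of the leading defect term, and the borderline case $s=1$ requiring the global error expansion of the differential part); the paper's version is shorter at that spot but pays for it with the determinant computation, which your argument avoids entirely. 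Both are sound; if you wanted to make your sketch fully self-contained you would still need to write out the defect expansion in detail, which is precisely the content of the result the paper delegates to Hairer--Lubich--Roche.
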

\begin{proof}
We give the proof in Appendix \ref{appendix}.
\end{proof}

\subsubsection{Dirac-structure preservation for the generator system\label{section 4.2.1}}
Usually, port-Hamiltonian systems are described by Dirac structures which can be considered as the generalizations of symplectic structures \cite[Section \uppercase\expandafter{\romannumeral2}.C ]{EPPC}. The fundamental property of a Dirac structure manifests itself in power conservation \cite[ Section 2.2]{PHST}, which means the Dirac structure connects the port variables $v_f,v_e$ in a way that the total power $v_e^{\top}v_f=0$, here $v_f,v_e$ represent the flow variable and effort variable, respectively. In this subsection, we introduce the definition of Dirac structure following \cite[Section 5.1]{PHST}, and then show that collocation methods employed for the generator system preserve the discrete Dirac structure at all collocation points $x_i$.

\begin{definition}[Linear Dirac structure]\label{Def. linear Dirac structure}
Let $\mathcal{F}$ be an $n$-dimensional linear space of flows and $\mathcal{E}=\mathcal{F}^{\ast}$ be its dual space of efforts. In addition, $\mathcal{U}$ is another linear space of dimension $n$, $F,E$ are $n\times n$ matrices representing the linear maps $F:\mathcal{F}\to \mathcal{U}$ and $E:\mathcal{E}\to \mathcal{U}$, respectively. Therefore, a linear subspace
\begin{align}\label{Eq. D}
\mathcal{D}=\left\{(v_f,v_e)\in \mathcal{F}\times \mathcal{E}\ |\ Fv_f+Ev_e=0\right\}\subseteq \mathcal{F}\times \mathcal{E}    
\end{align}
is a Dirac structure, if the matrices $F,E$ satiesfy
\begin{align}\label{Eq. E,F}
\begin{split}
&(i)\  EF^{\top}+FE^{\top}=0, \\
&(ii)\ \mathrm{rank}(F\ |\ E)=n.
\end{split}
\end{align}
\end{definition}

(\ref{Eq. D}) is the \textit{matrix kernel} representation of Dirac structure, and several other representations are displayed in \cite[Section 5]{PHST}. On this basis, we introduce a more general definition associated with Dirac structure following the \cite[Definition 3]{pHDAE}, so as to depict the structure-preserving property of collocation methods.   
\begin{definition}[General Dirac structure]\label{Def. general Dirac structure}
Let $\mathcal{X}$ be a state space and $\mathcal{V}$ be a vector bundle over $\mathcal{X}$ with fibers $\mathcal{V}_x\ (x\in \mathcal{X})$. A Dirac structure on $\mathcal{V}$ is a vector sub-bundle $\mathcal{D}\subseteq \mathcal{V}\bigoplus \mathcal{V}^{\ast}$ such that
$$\mathcal{D}_x\subseteq \mathcal{V}_x\times \mathcal{V}_x^{\ast}$$
is a linear Dirac structure for every $x\in \mathcal{X}$.
\end{definition}
\begin{remark}
The notation $\bigoplus$ here means the Whitney sum of two vector bundles $\mathcal{V}$ and $\mathcal{V}^{\ast}$, which is defined as the vector bundle whose fiber over each $x\in \mathcal{X}$ is naturally the direct product of the fibers $\mathcal{V}_x$ and $\mathcal{V}_x^{\ast}$.    
\end{remark}

Now we can correlate a Dirac structure with the generator system presented in the form of (\ref{Eq. pHDAE}). In fact, the connection between the autonomous generator system and its Dirac structure over the state space $\mathcal{X}$ can be established by the similar way of \cite[Theorem 2]{pHDAE}. Here we give a brief description to illustrate this point.

For the generator system in port-Hamiltonian form, consider the state space $\mathcal{X}$ and a vector bundle $\mathcal{V}$ over it. Define the flow fiber $\mathcal{V}_{x}=\mathcal{F}_{x}^s\times \mathcal{F}_{x}^p\times \mathcal{F}_{x}^d$ for each $x\in \mathcal{X}$, where $\mathcal{F}_{x}^s:=MT_{x}\mathcal{X}\subseteq \mathbb{R}^{21},\mathcal{F}_{x}^p:=\mathbb{R}^{21},\mathcal{F}_{x}^d:=\mathbb{R}^{42}$ are the storage flow fiber, port flow fiber and dissipation flow fiber, respectively. Write $v_f\in \mathcal{V}$ in partitioned form $v_f=(v_f^s;v_f^p;v_f^d)$. Similarly, $v_e\in \mathcal{V}^{\ast}$ has the partitioned expression $v_e=(v_e^s;v_e^p;v_e^d)$, thus the sub-bundle $\mathcal{D}\subseteq \mathcal{V}\bigoplus \mathcal{V}^{\ast}$ with 
\begin{align}\label{Eq. D_x}
\mathcal{D}_{x}=\left\{(v_f,v_e)\in \mathcal{V}_{x}\times \mathcal{V}_{x}^{\ast}\ \Bigg|\ v_f+\left(\begin{array}{cc}
   \Xi & I_{42}  \\
   -I_{42} & 0 
\end{array}\right)v_e=0\right\}    
\end{align}
is a Dirac structure on $\mathcal{V}$. Additionally, let $v_f,v_e$ satisfy
\begin{align*}
&v_f^s=-M\dot{x},\ v_f^p=y,\ v_f^d=(z(x);u(x)), \\ 
&v_e^s=z(x),\ v_e^p=u(x),\ v_e^d=-\Lambda(z(x);u(x)),
\end{align*}
then the generator system is equivalent to $(v_f,v_e)\in \mathcal{D}_x$.

So far, it is clear that the generator system preserves a Dirac structure given by (\ref{Eq. D_x}) along the solution $x(t)$ and input $u(x)$. Inspired by \cite[Section \uppercase\expandafter{\romannumeral3}.B]{pHDAE}, collocation methods applied to the generator system preserve the discrete Dirac structure at all collocation points $x_i\ (i=1,\cdots,s)$ as follows. 

Let $\gamma_1,\cdots,\gamma_s$ be distinct numbers and define coefficients $a_{ij},b_j$ by (\ref{Eq. collocation coefficient}). Take $k_i\ (i=1,\cdots,s)$ as certain unknowns, then choose collocation points $x_i=x_0+h\sum\limits_{j=1}^s a_{ij}k_j$. Consequently, there exists discrete Dirac structure $\left\{\mathcal{D}_{x_i}\ |\ i=1,\cdots,s\right\}$ defined by 
\begin{align}\label{Eq. D_xi}
\mathcal{D}_{x_i}=\left\{(v_{f,i},v_{e,i})\in \mathcal{V}_{x_i}\times \mathcal{V}_{x_i}^{\ast}\ \Bigg|\ v_{f,i}+\left(\begin{array}{cc}
   \Xi & I_{42}  \\
   -I_{42} & 0 
\end{array}\right)v_{e,i}=0\right\}    
\end{align}
at all collocation points $x_i$. Additionally, let $v_{f,i},v_{e,i}$ satisfy
\begin{align*}
&v_{f,i}^s=-Mk_i,\ v_{f,i}^p=y(x_i),\ v_{f,i}^d=(z(x_i);u(x_i)), \\ 
&v_{e,i}^s=z(x_i),\ v_{e,i}^p=u(x_i),\ v_{e,i}^d=-\Lambda(z(x_i);u(x_i)),    
\end{align*}
thus applying collocation method to the generator system in port-Hamiltonian form, i.e. (\ref{Eq. collocation for system}), is equivalent to $(v_{f,i},v_{e,i})\in \mathcal{D}_{x_i}$ together with 
\begin{align*}
x_f=x_0+h\sum\limits_{j=1}^s b_j k_j.
\end{align*}

\begin{remark}\label{Rmk. Discrete PBE}
Let $H(t):=\mathcal{H}(\overline{x}(t))$, where $\overline{x}(t)$ is the collocation polynomial of the state $x(t)$. Therefore, according to the properties of collocation polynomial, the power balance equation
\begin{align*}
\dot{H}(t_0+\gamma_i h)&=\nabla \mathcal{H}(\overline{x}(t_0+\gamma_i h))^{\top}\dot{\overline{x}}(t_0+\gamma_i h)=\nabla \mathcal{H}(x_i)^{\top}k_i=z(x_i)^{\top}Mk_i \\
&=-(z(x_i);u(x_i))^{\top} \Lambda (z(x_i);u(x_i))+y(x_i)^{\top}u(x_i)\leq y(x_i)^{\top}u(x_i) 
\end{align*}
holds for $i=1,\cdots,s$. Apply the quadrature formula associated with this collocation method, then if the coefficients $b_j\geq 0$, we obtain 
\begin{align*}
\mathcal{H}(x_f)-\mathcal{H}(x_0)&=H(t_f)-H(t_0)=\int_{t_0}^{t_f} \dot{H}(\tau)d\tau \\
&=h\sum\limits_{j=1}^s b_j \dot{H}(t_0+\gamma_j h)+\mathcal{O}(h^{p+1})\leq h\sum\limits_{j=1}^s b_j y(x_j)^{\top}u(x_j)+\mathcal{O}(h^{p+1}),
\end{align*}
where $\mathcal{O}(h^{p+1})$ represents the remainder of the quadrature formula. In the case that the method is $s$-stage Gauss method and the Hamiltonian function is quadratic, the remainder $\mathcal{O}(h^{p+1})$ vanishes, which gives a discrete dissipation inequality similar to Remark \ref{Rmk. PBE}.
\end{remark}

\section{Numerical simulations\label{section 5}}
In this section, we present numerical simulations on the generator system. Actually, the generator system in Figure \ref{Fig:ACSGS} will instantly reach a steady state, where all the rotary speeds of six mass blocks in Figure \ref{Fig:ACSGS}(c) remain tightly close to $\omega_s=120\pi \ \mathrm{rad/s}$. Therefore, it is adequate to concentrate on the numerical errors of six angular velocities $\omega_1,\cdots,\omega_6$, which can significantly reflect the effectiveness of the numerical methods. 

Besides the parameters of the generator system provided in Section \ref{section 2}, we select the following consistent initial values 
\begin{align*}
\dot{\boldsymbol{\Psi}}_0=\left(\begin{array}{c}
26014.5269 \\ 1.9571 \\ 25102.2884 \\ 6773.1172 \\ 0 \\ 0
\end{array}\right), \
\boldsymbol{\Psi}_0=\left(\begin{array}{c}
  0.0052 \\ -69.0057 \\ 17.9663 \\ -66.5859 \\ 645.4103 \\ -624.0651
\end{array}\right),\ \dot{\boldsymbol{\theta}}_0=\left(\begin{array}{c}
  120\pi \\ 120\pi \\ 120\pi \\ 120\pi \\ 120\pi \\ 120\pi
\end{array}\right),\ \boldsymbol{\theta}_0=\left(\begin{array}{c}
  -0.3629 \\ -0.3761 \\ -0.3897 \\ -0.4024 \\ -0.4143 \\ -0.4143
\end{array}\right),   
\end{align*}
here all the data (except $0,120\pi$) are account to four decimal places. Especially, for structure-preserving methods we choose
\begin{align*}
\dot{\widetilde{\boldsymbol{\Psi}}}_0=\left(\begin{array}{c}
26014.5269 \\ 1.9571 \\ 0 \\ 0
\end{array}\right), \
\widetilde{\boldsymbol{\Psi}}_0=\left(\begin{array}{c}
  0.0052 \\ -69.0057 \\ 645.4103 \\ -624.0651
\end{array}\right).
\end{align*}
Furthermore, the coefficient matrix $D$ describing the friction loss will be set to $0$, which is the ideal situation but will not influence markedly on the numerical results. 


Choose time step $h=10^{-4}$ and take $1$-stage Gauss method for an example of structure-preserving method, then we display the simulation results of P-C methods and structure-preserving method for $0\sim 10\ \mathrm{s}$ in Figure \ref{fig:10s}, compared with those given by PSCAD/EMTDC. It is apparent that all the methods proposed in this article possess better performance over PSCAD/EMTDC in numerical simulation, because the results obtained from P-C methods and structure-preserving method converge rapidly to the equilibrium point after a short transient process, while PSCAD/EMTDC gives results with obviously larger fluctuations after the state switching to generator operation at around $2.5\ \mathrm{s}$.     

\begin{figure}[!ht]
 \centering
 \includegraphics[width=0.9\textwidth]{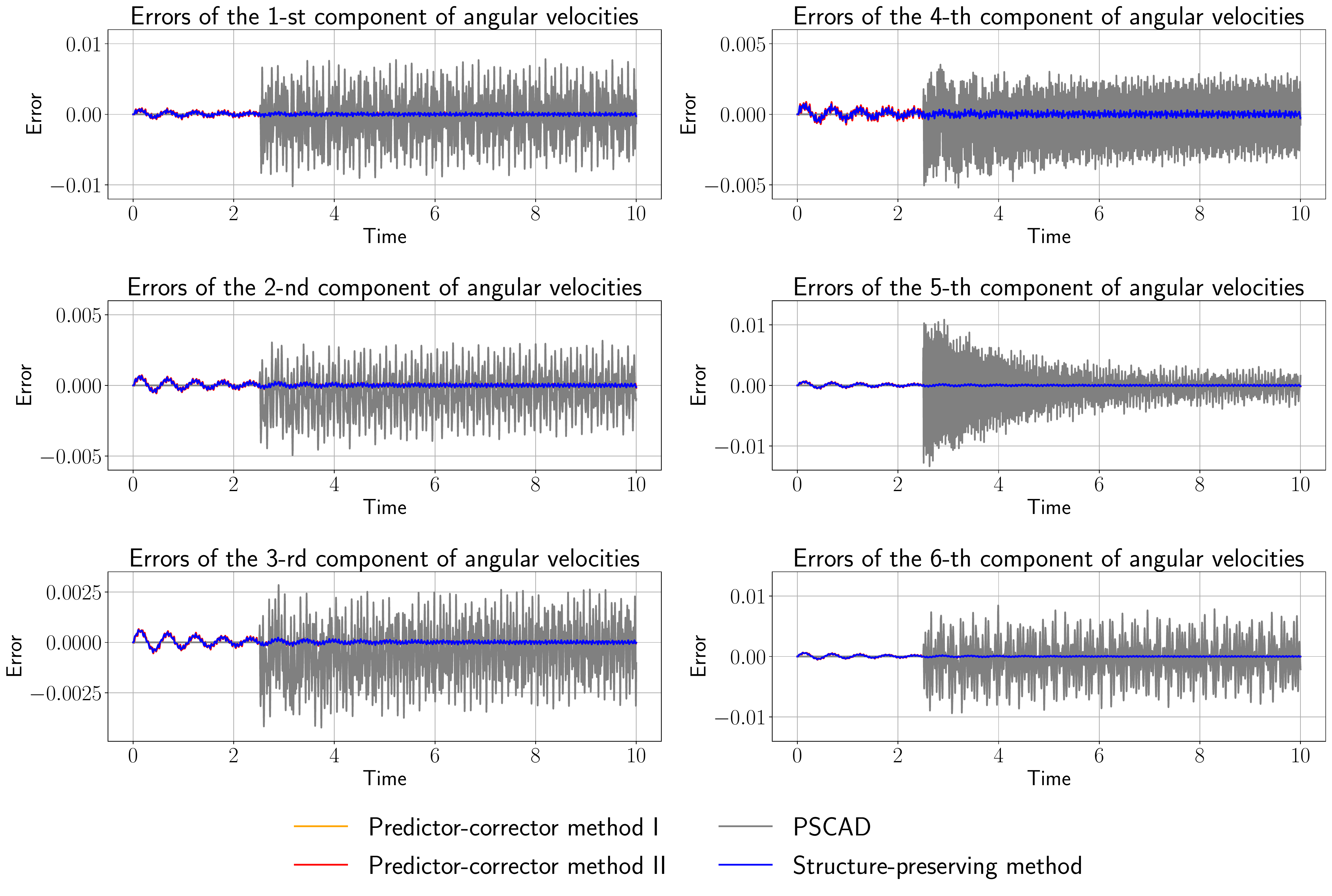}
 \caption{Simulation errors of angular velocities for $0\sim 10\ \mathrm{s}$. The errors here refer to the distinctions between the numerical results and the stable equilibrium point $\omega_s=120\pi \ \mathrm{rad/s}$.}
 \label{fig:10s}
\end{figure}

Next, we make the comparative analysis of P-C methods and structure-preserving method in long-term numerical simulation, whose results are shown in Figure \ref{fig:1000s}. In Figure \ref{fig:1000s}, we can see that both P-C method (II) and structure-preserving method have excellent computational stability, while the errors of P-C method (I) blow up at around $1000\ \mathrm{s}$. The reason for this phenomenon is that P-C method (I) is derived by left endpoint approximation, i.e. $g_M\left(\boldsymbol{\Psi}_{n+1},\boldsymbol{\theta}_{n+1}\right)\approx g_M\left(\boldsymbol{\Psi}_n,\boldsymbol{\theta}_n\right)$, neglecting $\boldsymbol{\Psi}_{n+1},\boldsymbol{\theta}_{n+1}^{[0]}$ obtained from (\ref{Eq. P-C I a}), (\ref{Eq. P-C I b}). On the contrary, P-C method (II) derived by $g_M\left(\boldsymbol{\Psi}_{n+1},\boldsymbol{\theta}_{n+1}\right)\approx g_M\left(\boldsymbol{\Psi}_{n+1},\boldsymbol{\theta}_{n+1}^{[0]}\right)$ shows excellent long-term stability in numerical simulation, which indicates that prior physical facts together with appropriate algorithm construction can lead to impressive simulation performance. 

Finally, we draw a detailed comparison between P-C method (II) and structure-preserving method in Figure \ref{fig:1500s}. From the simulation results, we can discover that the errors of structure-preserving method remain at around half the level of P-C method (II) throughout $10\sim 1500\ \mathrm{s}$. According to Theorem \ref{Thm. global error of Gauss method}, the structure-preserving method we take, i.e. $1$-stage Gauss method, is of order $2$, which can be verified by the simulation errors presented in Figure \ref{fig:1500s}. Moreover, since the construction of P-C method (II) is analogous to the improved Euler method, it is reasonable to treat this method as a second-order method. In this way, structure-preserving method exhibits its advantage over other numerical methods in long-term computational stability, as expected from its Dirac-structure preservation.

\begin{figure}[!ht]
 \centering
 \includegraphics[width=0.9\textwidth]{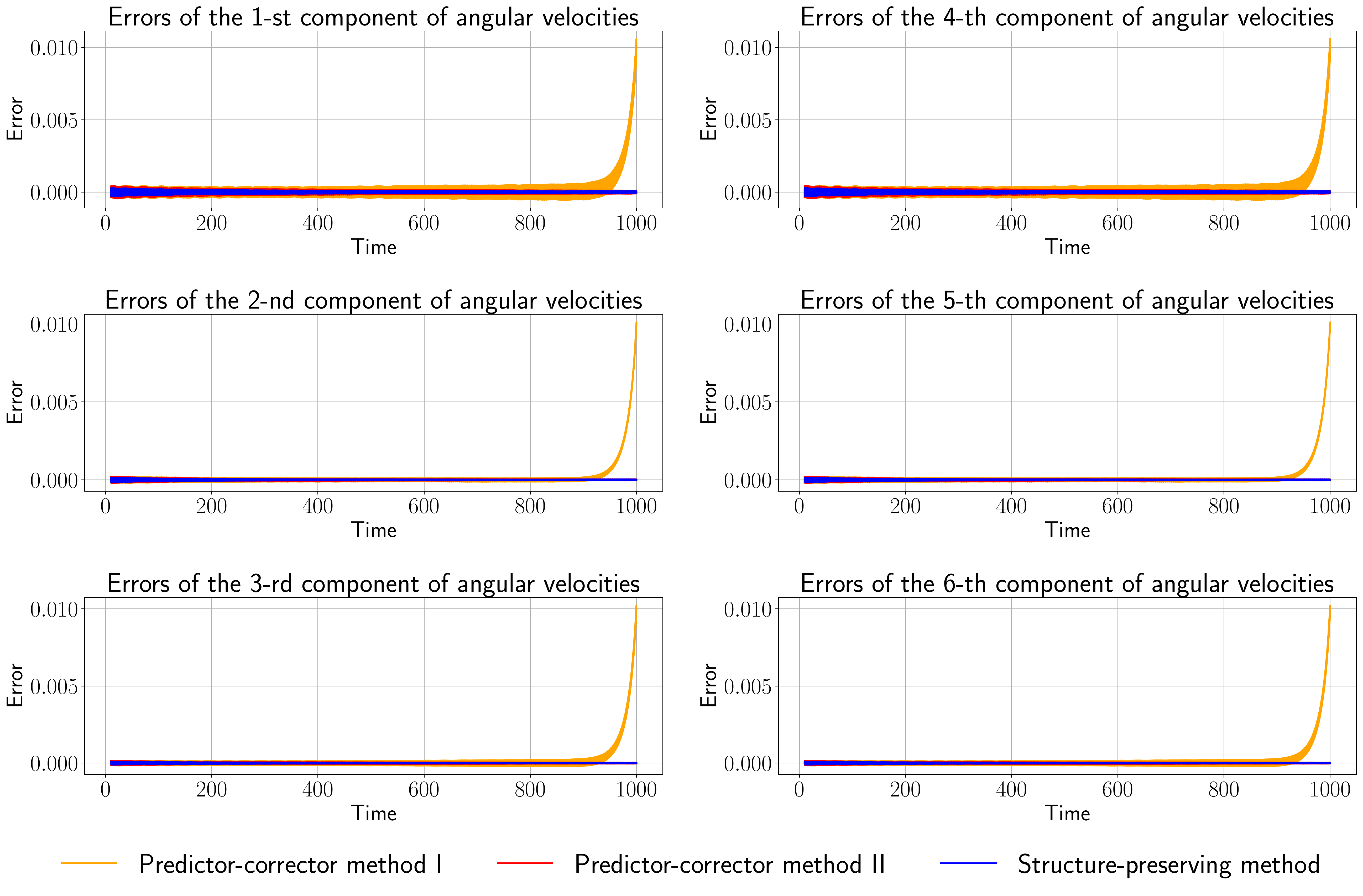}
 \caption{Simulation errors of angular velocities for $10\sim 1000\ \mathrm{s}$. The errors have the same meaning as those in Figure \ref{fig:10s}.}
 \label{fig:1000s}
\end{figure}

\begin{figure}[!ht]
 \centering
 \includegraphics[width=0.9\textwidth]{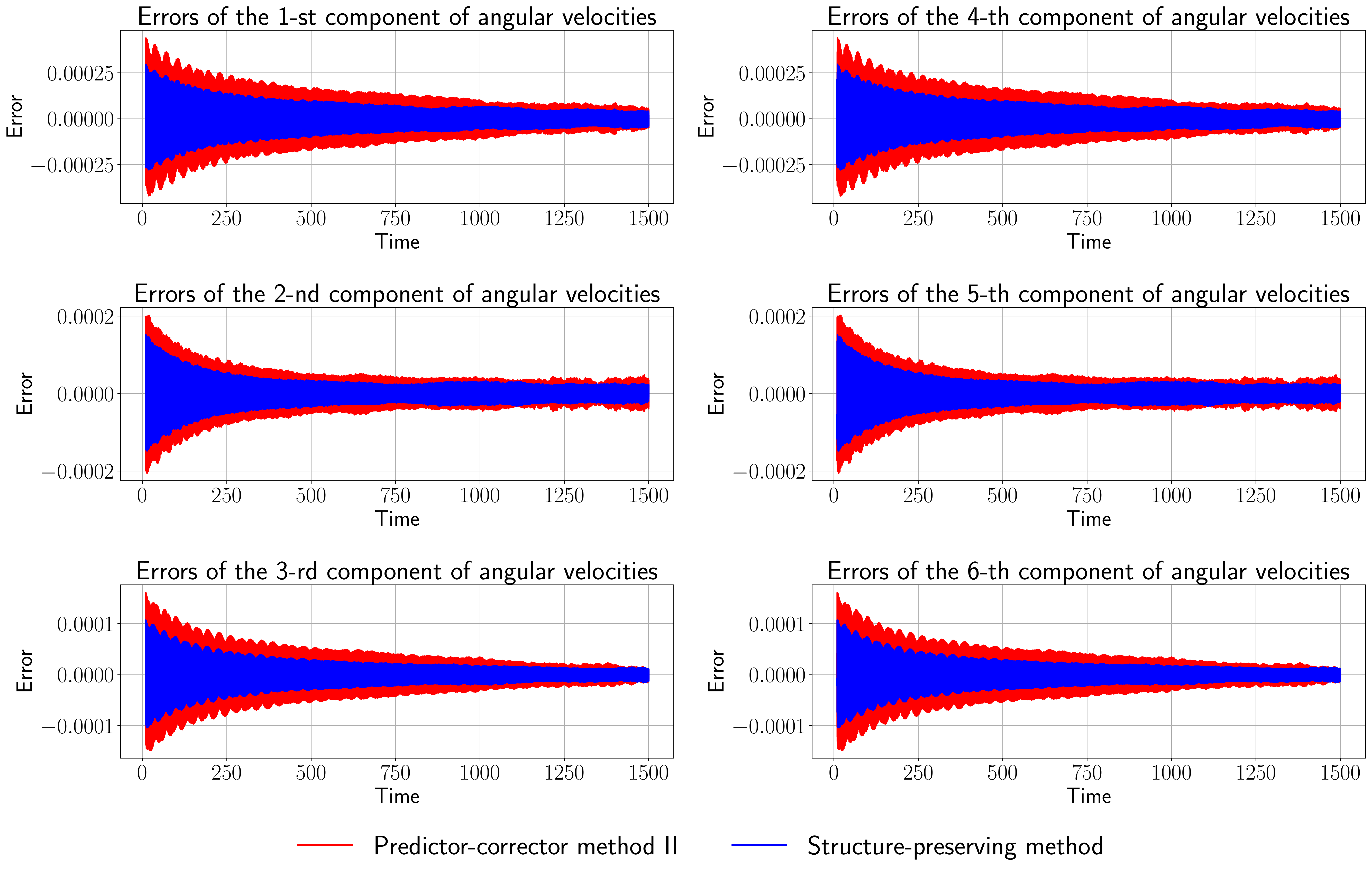}
 \caption{Simulation errors of angular velocities for $10\sim 1500\ \mathrm{s}$. The errors have the same meaning as those in Figure \ref{fig:10s}.}
 \label{fig:1500s}
\end{figure}

\section{Conclusions}\label{section 6}
In this article, we have presented predictor-corrector methods and structure-preserving methods for a generator system based on the first benchmark model of subsynchronous resonance. The structure-preserving property of the collocation methods has been illustrated by Dirac structure associated with port-Hamiltonian descriptor systems, which explains their advantage over PSCAD/EMTDC and predictor-corrector methods in terms of computational stability. Moreover, global error results of Gauss methods have guaranteed their effectiveness in numerical simulation. With appropriate initial conditions, these methods showed promising results in numerical simulations. 

\appendix
\section{Proof of Theorem \ref{Thm. global error of Gauss method}}\label{appendix}
First of all, we introduce the following convergence results for Runge-Kutta methods applied to index-1 differential-algebraic equations. 
\begin{theorem}\label{Thm. convergence result}
For an index 1 differential-algebraic system in the form of 
\begin{align}\label{Eq. differential-algebraic}
\left\{\begin{aligned}
&\dot{\widetilde{x}}=\widetilde{f}(t,\widetilde{x},\widetilde{y}), \\
&0=\widetilde{g}(t,\widetilde{x},\widetilde{y}), \\
\end{aligned}\right.     
\end{align}
assume that the initial values are consistent. Consider the Runge-Kutta method of classical order $p$, who satisfies
$$C(q):\ \sum\limits_{j=1}^s a_{ij}\gamma_j^{k-1}=\frac{\gamma_i^k}{k} \ \ \  i=1,\cdots,s,\ k=1,\cdots,q$$
with $p\geq q+1$ and has an invertible coefficient matrix $A=(a_{ij})$. Let $\rho=R(\infty)=1-b^{\top}A^{-1}e_s$, where $b=(b_1,\cdots,b_s)^{\top}$, $e_s=(1,\cdots,1)^{\top}\in \mathbb{R}^s$ and $R(w)=1+wb^{\top}(I-wA)^{-1}e_s$ is the stability function of this Runge-Kutta method. 

(1) If $b_i=a_{si}$ for all $i$, then the global error satisfies 
$$\widetilde{x}_n-\widetilde{x}(t_n)=\mathcal{O}(h^{p}),\ \widetilde{y}_n-\widetilde{y}(t_n)=\mathcal{O}(h^p)$$ for $t_n=nh\leq T_m$.

(2) If $-1\leq \rho <1$, then $$\widetilde{x}_n-\widetilde{x}(t_n)=\mathcal{O}(h^{p}),\ \widetilde{y}_n-\widetilde{y}(t_n)=\mathcal{O}(h^{q+1}).$$

(3) If $\rho=1$, then $$\widetilde{x}_n-\widetilde{x}(t_n)=\mathcal{O}(h^{p}),\ \widetilde{y}_n-\widetilde{y}(t_n)=\mathcal{O}(h^q).$$

(4) If $|\rho|>1$, then the numerical solution diverges.
\end{theorem}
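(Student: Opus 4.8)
The plan is to follow the classical route for Runge--Kutta methods on index-$1$ systems: eliminate the algebraic variable by the implicit function theorem, recognise the $\widetilde{x}$-part of the scheme as an ordinary Runge--Kutta iteration for the resulting explicit ODE, and control the $\widetilde{y}$-part through a scalar-coefficient error recursion whose amplification factor is exactly $\rho=R(\infty)$. First I would use index $1$: since $\partial\widetilde{g}/\partial\widetilde{y}$ is invertible near the (consistent) solution, the implicit function theorem gives a locally unique map $G$, as smooth as $\widetilde{g}$, with $\widetilde{g}(t,\widetilde{x},G(t,\widetilde{x}))\equiv 0$, so the exact solution obeys $\dot{\widetilde{x}}=F(t,\widetilde{x}):=\widetilde{f}(t,\widetilde{x},G(t,\widetilde{x}))$ with $\widetilde{y}(t)=G(t,\widetilde{x}(t))$. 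The same argument applied to the stage equations shows that for $h$ small the numerical stages $(X_i,Y_i)$ exist, are unique, and satisfy $Y_i=G(t_0+\gamma_ih,X_i)$; hence the $X_i$ and $\widetilde{x}_1$ are precisely the Runge--Kutta stages and step for $\dot{\widetilde{x}}=F(t,\widetilde{x})$. Classical convergence theory then yields local error $\mathcal{O}(h^{p+1})$ and, by the standard accumulation argument on $[0,T_m]$, the global estimate $\widetilde{x}_n-\widetilde{x}(t_n)=\mathcal{O}(h^p)$ — the first assertion in each of (1)--(3); moreover this global error carries a smooth leading term (asymptotic error expansion), which I would retain for the next step.

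Next I would analyse the algebraic variable. Stacking $(\widetilde{x};\widetilde{y})$ as $z$ and writing the stages as $Z_i=z_0+h\sum_j a_{ij}k_j$, invertibility of $A$ gives $z_1=z_0+h\sum_i b_ik_i=\rho\,z_0+\sum_j d_jZ_j$ with $d^{\top}:=b^{\top}A^{-1}$ and $\rho=1-d^{\top}e_s=R(\infty)$, so for the algebraic component $\widetilde{y}_{n+1}=\rho\,\widetilde{y}_n+\sum_jd_jY_j^{(n)}$. From $C(q)$ one gets $(\gamma_j^k)_j=kA(\gamma_j^{k-1})_j$ for $k\le q$, hence $d^{\top}(\gamma_j^m)_j=m\,b^{\top}(\gamma_j^{m-1})_j=1$ for $1\le m\le q$ by the order-$q$ quadrature conditions; a Taylor expansion then gives the interpolation identity $\widetilde{y}(t_{n+1})=\rho\,\widetilde{y}(t_n)+\sum_jd_j\widetilde{y}(t_n+\gamma_jh)+\tau_n$ with $\tau_n=\mathcal{O}(h^{q+1})$ and smooth leading part. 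Since $p\ge q+1$ and $\widetilde{x}_n-\widetilde{x}(t_n)=\mathcal{O}(h^p)$ has a smooth leading term, the stage-order-$q$ estimate gives $X_i^{(n)}-\widetilde{x}(t_n+\gamma_ih)=\mathcal{O}(h^{q+1})$ with smooth leading part, hence the same for $Y_i^{(n)}-\widetilde{y}(t_n+\gamma_ih)$, and subtracting the identity produces the error recursion
\[
\varepsilon_{n+1}=\rho\,\varepsilon_n+\delta_n,\qquad \varepsilon_0=0,\qquad \delta_n=h^{q+1}\Psi(t_n)+\mathcal{O}(h^{q+2}),
\]
with $\Psi$ smooth. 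Solving, $\varepsilon_n=\sum_{k<n}\rho^{\,n-1-k}\delta_k$: for $|\rho|<1$ the geometric sum is bounded, $\varepsilon_n=\mathcal{O}(h^{q+1})$; for $\rho=-1$ a summation by parts of the alternating sum of the smooth leading part gains one power of $h$, still $\varepsilon_n=\mathcal{O}(h^{q+1})$ — this is (2) for $-1\le\rho<1$; for $\rho=1$ the sum has $n=\mathcal{O}(1/h)$ terms, costing a power, $\varepsilon_n=\mathcal{O}(h^q)$ — this is (3); for $|\rho|>1$ the factor $\rho^n$ with $n=\mathcal{O}(1/h)$ grows without bound as $h\to0$, so the numerical solution diverges — this is (4). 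For (1), stiff accuracy $b_i=a_{si}$ forces $d^{\top}=b^{\top}A^{-1}=e_s^{\top}$, hence $\rho=0$ and $\widetilde{y}_{n+1}=Y_s^{(n)}=G(t_{n+1},X_s^{(n)})=G(t_{n+1},\widetilde{x}_{n+1})$, so $\widetilde{y}_n-\widetilde{y}(t_n)=G(t_n,\widetilde{x}_n)-G(t_n,\widetilde{x}(t_n))=\mathcal{O}(\|\widetilde{x}_n-\widetilde{x}(t_n)\|)=\mathcal{O}(h^p)$.

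I expect the main obstacle to be the $\widetilde{y}$-analysis: establishing that the algebraic variable is propagated by the extrapolation $\widetilde{y}_{n+1}=\rho\,\widetilde{y}_n+d^{\top}(Y_i^{(n)})_i$ with amplification factor exactly $\rho=R(\infty)$, and — at the borderline $\rho=-1$ — proving that the alternating accumulation of the $\mathcal{O}(h^{q+1})$ stage-order defects does not degrade the order, which needs those defects to split into a smooth-in-$t$ leading term (ultimately resting on the asymptotic error expansion of the underlying Runge--Kutta method for the reduced ODE) plus an $\mathcal{O}(h^{q+2})$ remainder. By contrast, the implicit-function-theorem reduction, the $\mathcal{O}(h^p)$ bound for $\widetilde{x}$, and the $|\rho|>1$ divergence are comparatively routine once the recursion is in place.
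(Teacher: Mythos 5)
Your argument is correct and is essentially the standard proof of this result; the paper itself does not prove the theorem but defers to the literature (it cites Hairer--Lubich--Roche, \emph{The Numerical Solution of Differential-Algebraic Systems by Runge--Kutta Methods}, Theorem 3.1), and the proof given there proceeds exactly along your lines: the implicit-function-theorem reduction $\widetilde{y}=G(t,\widetilde{x})$ so that the $\widetilde{x}$-component is the Runge--Kutta method applied to the reduced ODE (hence $\mathcal{O}(h^p)$), the elimination $\widetilde{y}_{n+1}=\rho\,\widetilde{y}_n+b^{\top}A^{-1}(Y_i^{(n)})_i$ for the algebraic variable, and the case analysis on the amplification factor $\rho=R(\infty)$. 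You have also correctly isolated the one delicate point, namely that the borderline case $\rho=-1$ requires the $\mathcal{O}(h^{q+1})$ defects in the error recursion to carry a smooth leading term (coming from the asymptotic expansion of the stage errors) so that the alternating accumulation does not cost an order of $h$.
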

\begin{proof}
See \cite[Theorem 3.1]{NSDAS}.
\end{proof}

It is evident that the constant $\rho$ plays a decisive role in Theorem \ref{Thm. convergence result}. For Gauss methods, we have the conclusion as follows. 

\begin{lemma}\label{Lem. rho}
For the $s$-stage Gauss method, the constant $\rho$ defined in Theorem \ref{Thm. convergence result} satisfies $\rho=(-1)^s$.
\end{lemma}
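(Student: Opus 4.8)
The plan is to compute the stability function $R(w)$ of the $s$-stage Gauss method explicitly enough to evaluate it at $w=\infty$. Recall that the Gauss method is a collocation method based on the nodes $\gamma_1,\dots,\gamma_s$, which are the zeros of the $s$-th shifted Legendre polynomial; equivalently, it is the unique $s$-stage Runge-Kutta method whose stability function $R(w)$ is the $(s,s)$-diagonal Padé approximation to $e^w$. Writing $R(w)=P_s(w)/Q_s(w)$ with $P_s,Q_s$ polynomials of degree $s$, the diagonal Padé structure gives the symmetry $P_s(w)=Q_s(-w)$, so that $\rho=R(\infty)=\lim_{w\to\infty}P_s(w)/Q_s(w)$ equals the ratio of the leading coefficients of $P_s$ and $Q_s$.

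First I would record that for the Gauss method the coefficient matrix $A=(a_{ij})$ is invertible (a standard fact for collocation methods at distinct nodes, since $A$ factors through the Vandermonde matrix of the $\gamma_i$), so $\rho=1-b^{\top}A^{-1}e_s$ is well defined and coincides with $R(\infty)$. Next I would invoke the explicit formula for the diagonal Padé numerator and denominator: up to a common normalization,
\begin{align*}
Q_s(w)=\sum_{k=0}^{s}\frac{(2s-k)!\,s!}{(2s)!\,k!\,(s-k)!}\,(-w)^k,\qquad P_s(w)=Q_s(-w),
\end{align*}
so that the leading coefficient of $Q_s$ is $(-1)^s s!/(2s)!$ and the leading coefficient of $P_s$ is $s!/(2s)!$. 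Taking the ratio gives $\rho=R(\infty)=(-1)^{-s}=(-1)^s$, which is the claim. Alternatively, and perhaps more self-containedly, one can argue directly from the order conditions: the Gauss method has classical order $p=2s$, hence $R(w)-e^{w}=\mathcal{O}(w^{2s+1})$, which forces $R$ to be the $(s,s)$-Padé approximant by uniqueness; combined with the time-symmetry of Gauss methods, which yields $R(w)R(-w)=1$, one gets $R(\infty)R(0)=\pm 1$ with $R(0)=1$, and a sign count on the degree $s$ numerator pins down $R(\infty)=(-1)^s$.

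The main obstacle is making the evaluation of the leading coefficients rigorous without simply quoting the Padé tables: one must either (i) cite a standard reference (e.g. Hairer--Wanner) for the fact that the Gauss method's stability function is the diagonal Padé approximant together with the explicit coefficient formula, or (ii) prove $R(w)R(-w)=1$ from the symmetry of the Gauss tableau and then carefully track the parity of the degree-$s$ polynomials to extract the sign. I expect route (i) to be the cleanest for the paper; the only care needed is to confirm the normalization so that the $(-1)^s$ is not accidentally absorbed, which is exactly why writing $P_s(w)=Q_s(-w)$ and reading off the top-degree terms is the safe way to finish.
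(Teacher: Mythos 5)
Your proof is correct, but it takes exactly the route the paper deliberately sidesteps. You identify $\rho=R(\infty)$ with the ratio of the leading coefficients of the $(s,s)$-Pad\'{e} approximant to $e^w$, relying on Ehle's theorem that the Gauss stability function is that approximant plus the explicit Pad\'{e} coefficient table; the paper acknowledges this argument (citing Ehle) and then instead gives a self-contained linear-algebra computation of $b^{\top}A^{-1}e_s$ that never touches the stability function. Concretely, it conjugates $A$ by the matrix $G=\left(P_{j-1}(\gamma_i)\right)$ of shifted Legendre values (the $W$-transformation), obtaining the tridiagonal matrix $X_G$ of Lemma \ref{Lem. X_G}; it shows $G^{\top}Be_s$ is the first unit vector, computes $\det X_G=s!/(2s)!$ by a two-step recursion on the tridiagonal minors, and reads off $b^{\top}A^{-1}e_s=\left(X_G^{-1}\right)_{11}=D_2/\det X_G$, which is $0$ for even $s$ and $2$ for odd $s$. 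Your route is shorter but outsources the two key inputs (the Pad\'{e} identification and the coefficient formula) to the literature, whereas the paper's route proves everything from the collocation conditions and the Legendre integration recurrences, and yields $\det A=s!/(2s)!$ --- hence the invertibility of $A$ that Theorem \ref{Thm. convergence result} also requires --- as a by-product. Two small cautions on your write-up: the parenthetical justification of invertibility (``factors through the Vandermonde matrix of the $\gamma_i$'') is too quick, since that factorization gives $\det A=\prod_i\gamma_i/s!$ and therefore also needs that no Gauss node vanishes (true here, but distinctness alone does not suffice: Lobatto IIIA has distinct nodes and singular $A$); and your alternative sketch via $R(w)R(-w)=1$ still needs the non-degeneracy $\deg P_s=\deg Q_s=s$ before the sign count closes. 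Neither gap is fatal, but the first reference-based route is the one you should actually write out.
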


This conclusion has been presented in \cite[pp. 227]{SODE2} without proof, and can be proven through the property of Pad\'{e} approximation together with the fact that the stability function of $s$-stage Gauss method is the $(s,s)$-Pad\'{e} approximation (see \cite{HASM}). For the sake of completeness, we propose a straightforward approach to calculating $\rho$ for the $s$-stage Gauss method, which has no concern with the stability function. For this purpose, some useful lemmas will be presented.

Let 
\begin{align}\label{Eq. P_k}
P_k(\lambda)=\frac{\sqrt{2k+1}}{k!}\frac{\mathrm{d}^k}{\mathrm{d}\lambda^k}  \bigg(\lambda^k(\lambda-1)^k\bigg)=\sqrt{2k+1}\sum\limits_{m=0}^k (-1)^{m+k}\left(\begin{array}{c} k \\ m \end{array}\right)\left(\begin{array}{c} m+k \\ m \end{array}\right)\lambda^m  
\end{align}
be the shifted Legendre polynomials normalized such that 
\begin{align*}
\int_0^1 P_k^2(\lambda)d\lambda=1.    
\end{align*}
Then these polynomials satisfy the integration formulas
\begin{align}\label{Eq. P_k formula}
\int_0^{\lambda} P_0(\eta)d\eta=\xi_1 P_1(\lambda)+\frac{1}{2}P_0(\lambda), \
\int_0^{\lambda} P_k(\eta)d\eta=\xi_{k+1} P_{k+1}(\lambda)-\xi_k P_{k-1}(\lambda),
\end{align}
with $\xi_k=\frac{1}{2\sqrt{4k^2-1}}$ for $k=1,2,\cdots$. The first lemma is given as below. 
\begin{lemma}\label{Lem. G}
Suppose that $\gamma_1,\cdots,\gamma_s$ are the zero points of $s$th shifted Legendre polynomial and $B=\mathrm{diag}(b_1,\cdots,b_s)$, where $b_1,\cdots,b_s$ are the coefficients of $s$-stage Gauss method. Then the matrix 
\begin{align}\label{Eq. G}
G=\bigg(P_{j-1}(\gamma_i)\bigg)_{i,j=1,\cdots,s}    
\end{align}
satisfies $G^{\top}BG=I$.
\end{lemma}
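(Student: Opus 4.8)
The plan is to verify the identity $G^{\top} B G = I$ by a direct computation based on two facts: the Gauss quadrature with nodes $\gamma_1,\dots,\gamma_s$ and weights $b_1,\dots,b_s$ is exact for all polynomials of degree $\le 2s-1$, and the shifted Legendre polynomials $P_0,\dots,P_{s-1}$ form an $L^2(0,1)$-orthonormal system. First I would write out the $(j,k)$-entry of $G^{\top} B G$ explicitly as
\begin{align*}
(G^{\top} B G)_{jk}=\sum_{i=1}^{s} b_i\, P_{j-1}(\gamma_i)\, P_{k-1}(\gamma_i),
\end{align*}
and observe that the summand is the value at $\gamma_i$ of the polynomial $P_{j-1}P_{k-1}$, whose degree is $(j-1)+(k-1)\le 2s-2\le 2s-1$. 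Hence the sum is precisely the Gauss quadrature approximation of $\int_0^1 P_{j-1}(\lambda)P_{k-1}(\lambda)\,d\lambda$, and by exactness of the quadrature this equals the integral exactly. By the normalization in \eqref{Eq. P_k} (and orthogonality of Legendre polynomials of different degrees), that integral is $\delta_{jk}$, giving $G^{\top} B G = I$.

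The only genuine obstacle is justifying that the $s$-point Gauss rule with \emph{these particular} weights $b_i$ is exact on degree $\le 2s-1$. I would handle this by recalling how the $b_i$ arise in the collocation construction \eqref{Eq. collocation coefficient}: $b_i=\int_0^1 \ell_i(\tau)\,d\tau$ with $\ell_i$ the Lagrange basis for the nodes $\gamma_j$. This already gives exactness on degree $\le s-1$ (interpolatory quadrature). To push to degree $\le 2s-1$, write an arbitrary polynomial $\pi$ of degree $\le 2s-1$ as $\pi = P_s\, \sigma + r$ with $\deg\sigma\le s-1$ and $\deg r\le s-1$ (division by the degree-$s$ node polynomial, which up to scaling is $P_s$ since the $\gamma_i$ are its roots). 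Then $\int_0^1 P_s\sigma\,d\lambda = 0$ by orthogonality of $P_s$ to all lower-degree polynomials, while $\sum_i b_i P_s(\gamma_i)\sigma(\gamma_i)=0$ because each $P_s(\gamma_i)=0$; and the rule integrates $r$ exactly since $\deg r\le s-1$. Combining, $\sum_i b_i \pi(\gamma_i)=\int_0^1 \pi\,d\lambda$, which is the required exactness.

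As a minor auxiliary point I would note that the factor $\sqrt{2k+1}$ in \eqref{Eq. P_k} is exactly the scaling that makes $\int_0^1 P_k^2=1$, so no extra normalization bookkeeping is needed; the integration formulas \eqref{Eq. P_k formula} are not actually required for this lemma (they will be used later for the computation of $\rho$), so I would not invoke them here. The whole argument is short: set up the entrywise sum, recognize it as Gauss quadrature, invoke $2s-1$ exactness (proved in one line via the division trick), and conclude with orthonormality. I expect the write-up to be under half a page, with the division-by-$P_s$ step being the part that deserves a sentence of care.
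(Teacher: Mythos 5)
Your proposal is correct and follows essentially the same route as the paper: the paper's proof likewise identifies $(G^{\top}BG)_{kl}$ with the Gauss quadrature of $P_{k}P_{l}$ (degree $\le 2s-2$), invokes exactness of the $s$-point Gauss rule, and concludes by orthonormality. The only difference is that you prove the degree-$(2s-1)$ exactness via the division-by-$P_s$ argument, whereas the paper simply cites it from Hairer--Wanner.
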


\begin{proof} 
Similarly to the proof of \cite[Lemma 5.9]{SODE2}, the polynomials $P_k(\lambda)P_l(\lambda)$ ($k+l\leq 2s-2$) can be exactly integrated by Gauss quadrature formula, which means 
\begin{align*}
\sum\limits_{i=1}^s b_i P_k(\gamma_i)P_l(\gamma_i)=\int_0^1 P_k(\lambda)P_l(\lambda)d\lambda=\delta_{kl}.       
\end{align*}
This implies that $G^{\top}BG=I$.
\end{proof}

Lemma \ref{Lem. G} indicates that the matrix $G$ is nonsingular, thus we have the following lemma.
\begin{lemma}\label{Lem. X_G}
Let $A=(a_{ij})$ be the coefficient matrix for the $s$-stage Gauss method, then it can be obtained that 
\begin{align}\label{Eq. X_G}
G^{-1}AG=\left(\begin{array}{ccccc}
    1/2 & -\xi_1 &  &  &  \\
    \xi_1 & 0 & -\xi_2 &  &  \\
       & \xi_2 & \ddots & \ddots &  \\
       &  & \ddots & 0 & -\xi_{s-1} \\
       &  &  & \xi_{s-1} & 0
\end{array}\right)=:X_G.  
\end{align}
\end{lemma}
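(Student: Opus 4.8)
The plan is to exploit the collocation representation $a_{ij}=\int_0^{\gamma_i}\ell_j(\tau)\,d\tau$ of the coefficient matrix together with the integration recurrences (\ref{Eq. P_k formula}) for the shifted Legendre polynomials, so that the asserted identity $G^{-1}AG=X_G$ collapses to a column-by-column comparison of $AG$ and $GX_G$. Since Lemma \ref{Lem. G} already guarantees that $G$ is nonsingular, it suffices to verify $AG=GX_G$.

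First I would compute $AG$ explicitly. Because interpolation at the $s$ distinct nodes $\gamma_1,\dots,\gamma_s$ reproduces any polynomial of degree at most $s-1$, we have $\sum_{k=1}^s P_{j-1}(\gamma_k)\ell_k(\tau)=P_{j-1}(\tau)$ for every $j=1,\dots,s$. Hence
\begin{align*}
(AG)_{ij}=\sum_{k=1}^s a_{ik}P_{j-1}(\gamma_k)=\int_0^{\gamma_i}\sum_{k=1}^s P_{j-1}(\gamma_k)\ell_k(\tau)\,d\tau=\int_0^{\gamma_i}P_{j-1}(\tau)\,d\tau,\qquad i,j=1,\dots,s.
\end{align*}
Next I would substitute (\ref{Eq. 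P_k formula}): for $j=1$ this gives $\int_0^{\gamma_i}P_0=\tfrac12 P_0(\gamma_i)+\xi_1 P_1(\gamma_i)$, and for $2\le j\le s$ it gives $\int_0^{\gamma_i}P_{j-1}=\xi_j P_j(\gamma_i)-\xi_{j-1}P_{j-2}(\gamma_i)$. The crucial observation concerns the last column: since $\gamma_1,\dots,\gamma_s$ are precisely the zeros of $P_s$, the term $\xi_s P_s(\gamma_i)$ vanishes identically, so $\int_0^{\gamma_i}P_{s-1}=-\xi_{s-1}P_{s-2}(\gamma_i)$. Reading each of these expressions as a linear combination of the columns of $G$ — recall the $j$-th column of $G$ is $\bigl(P_{j-1}(\gamma_i)\bigr)_{i=1,\dots,s}$ — one sees that the $j$-th column of $AG$ equals $G$ applied to the $j$-th column of the tridiagonal matrix $X_G$ in (\ref{Eq. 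X_G}); that is, $AG=GX_G$. Left-multiplying by $G^{-1}$ then yields $G^{-1}AG=X_G$.

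I do not anticipate a genuine obstacle: the argument is a short computation once the right objects are lined up. The only points requiring care are the degree bookkeeping — the interpolation identity (and, equivalently, the Gauss quadrature) is invoked only for polynomials of degree $\le s-1$, which is exactly what the entries $\int_0^{\gamma_i}P_{j-1}$ with $j\le s$ need — and the special treatment of the $s$-th column, where the Gauss property (nodes being roots of $P_s$) is what produces the single off-diagonal entry $-\xi_{s-1}$ rather than the generic pattern.
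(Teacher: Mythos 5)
Your proof is correct and follows essentially the same route as the paper's: both reduce the claim to $AG = GX_G$ via the exactness of the quadrature $\sum_j a_{ij}\,q(\gamma_j)=\int_0^{\gamma_i}q(\tau)\,d\tau$ for polynomials of degree $\le s-1$ (which you derive from the Lagrange interpolation identity and the paper phrases as the simplifying condition $C(q)$), then apply the integration recurrences (\ref{Eq. P_k formula}) together with $P_s(\gamma_i)=0$ for the last column. No gaps.
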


\begin{proof}
See details in \cite[Theorem 5.6]{SODE2}. In brief, $C(q)$ means the quadrature formulas with nodes $\gamma_1,\cdots,\gamma_s$ and weights $a_{i1},\cdots,a_{is}$ which can exactly integrate polynomials up to degree $q-1$ on the interval $[0,\gamma_i]$ ($i=1,\cdots,s$). Combining this conclusion with (\ref{Eq. P_k formula}) results in 
\begin{align*}
&\sum\limits_{j=1}^s a_{ij}P_0(\gamma_j)=\int_0^{\gamma_i} P_0(\lambda)d\lambda=\xi_1 P_1(\gamma_i)+\frac{1}{2}P_0(\gamma_i), \\
&\sum\limits_{j=1}^s a_{ij}P_k(\gamma_j)=\int_0^{\gamma_i} P_k(\lambda)d\lambda=\xi_{k+1} P_{k+1}(\gamma_i)-\xi_k P_{k-1}(\gamma_i) \ \ k=1,\cdots,q-1.
\end{align*}
Then insert (\ref{Eq. G}) into the equations above and write them in matrix form, afterwards (\ref{Eq. X_G}) can be obtained by the fact that $P_s(\gamma_1)=\cdots=P_s(\gamma_s)=0$ and $G$ is nonsingular.
\end{proof}

Lemma \ref{Lem. X_G} has introduced a tridiagonal matrix $X_G$ which is similar to $A$, hence the determinant of $A$ can be calculated through the relation $\mathrm{det}A=\mathrm{det}X_G$.
\begin{lemma}\label{Lem. detA}
$\mathrm{det}A=\mathrm{det}X_G=\frac{s!}{(2s)!}$.    
\end{lemma}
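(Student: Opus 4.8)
The plan is to use the similarity $G^{-1}AG=X_G$ established in Lemma~\ref{Lem. X_G}, which already yields $\mathrm{det}\,A=\mathrm{det}\,X_G$, and then to evaluate $\mathrm{det}\,X_G$ by exploiting that $X_G$ is tridiagonal with all but its first diagonal entry equal to zero. Writing $X_G^{(n)}$ for the $n\times n$ matrix of the form (\ref{Eq. X_G}) and $D_n=\mathrm{det}\,X_G^{(n)}$, I first observe that since $\xi_k$ depends only on $k$, the leading principal $n\times n$ block of $X_G^{(s)}$ is exactly $X_G^{(n)}$; hence the classical three-term recursion for determinants of tridiagonal matrices gives
\begin{align*}
D_n=(X_G)_{nn}\,D_{n-1}-(X_G)_{n-1,n}\,(X_G)_{n,n-1}\,D_{n-2}=0-(-\xi_{n-1})(\xi_{n-1})\,D_{n-2}=\xi_{n-1}^2\,D_{n-2}
\end{align*}
for $n\geq 2$, with the conventions $D_0=1$ and $D_1=\tfrac12$.

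Second, I would resolve this two-term recursion by induction in steps of two. The base cases are $D_0=1=\tfrac{0!}{0!}$ and $D_1=\tfrac12=\tfrac{1!}{2!}$. For the inductive step, assume $D_{s-2}=\tfrac{(s-2)!}{(2s-4)!}$; using $\xi_k^2=\tfrac{1}{4(4k^2-1)}=\tfrac{1}{4(2k-1)(2k+1)}$ one gets the elementary identity
\begin{align*}
\xi_{s-1}^2=\frac{1}{4(2s-3)(2s-1)}=\frac{s(s-1)}{(2s)(2s-1)(2s-2)(2s-3)},
\end{align*}
and multiplying this by $\tfrac{(s-2)!}{(2s-4)!}$ collapses, after recombining $s(s-1)(s-2)!$ into $s!$ and $(2s)(2s-1)(2s-2)(2s-3)(2s-4)!$ into $(2s)!$, to $D_s=\tfrac{s!}{(2s)!}$. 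This closes the induction and establishes $\mathrm{det}\,A=\mathrm{det}\,X_G=\tfrac{s!}{(2s)!}$.

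There is no real obstacle here; the only care needed is in the signs of the tridiagonal expansion (so that the off-diagonal term enters as $+\xi_{n-1}^2$) and in keeping the index parity straight, which the step-of-two induction handles automatically. For completeness I note an alternative that avoids $X_G$: writing $A=\widetilde{C}V^{-1}$ with $\widetilde{C}_{ik}=\gamma_i^k/k$ and $V=(\gamma_i^{j-1})_{i,j}$ (which follows from (\ref{Eq. collocation coefficient}) and Lagrange interpolation), one obtains $\mathrm{det}\,A=\tfrac{1}{s!}\prod_{i=1}^s\gamma_i$, and Vieta's formulas applied to the $s$th shifted Legendre polynomial $\tfrac{\mathrm{d}^s}{\mathrm{d}x^s}\bigl(x^s(x-1)^s\bigr)$, whose constant term is $(-1)^s s!$ and leading coefficient $(2s)!/s!$, give $\prod_i\gamma_i=(s!)^2/(2s)!$ and hence the same value; but the tridiagonal route is the one consistent with the lemmas already in place.
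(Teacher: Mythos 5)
Your proof is correct and follows essentially the same route as the paper: both exploit the tridiagonal structure and the zero diagonal of $X_G$ to get the two-term recursion $D\mapsto \xi^2 D$, the only difference being organizational --- you run the recursion on leading principal minors with a single parity-free induction, whereas the paper expands from trailing minors, splits into even/odd cases for $D_2,D_3$, and recombines via $\det X_G=\tfrac12 D_2+\xi_1^2 D_3$. Your sketched Vandermonde/Vieta alternative ($\det A=\prod_i\gamma_i/s!$ together with $\prod_i\gamma_i=(s!)^2/(2s)!$) is also valid and provides an independent check, but is not the paper's route.
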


\begin{proof}
Take
\begin{align*}
D_k=\left|\begin{array}{cccc}
       0 & -\xi_k &  &  \\
       \xi_k & \ddots & \ddots &  \\
         & \ddots & 0 & -\xi_{s-1} \\
         &  & \xi_{s-1} & 0
\end{array}\right|,    
\end{align*}
then we have 
\begin{align*}
\begin{split}
D_k&=\left|\begin{array}{cccccc}
       0 & -\xi_k &  &  &  & \\
       \xi_k & 0 & -\xi_{k+1} &  &  &  \\
         & \xi_{k+1} & 0 & \ddots & \\
         &  & \ddots & \ddots & \ddots & \\
         &  &  & \ddots & 0 & -\xi_{s-1} \\
         &  &  &  & \xi_{s-1} & 0
\end{array}\right| \\
&=\xi_k \left|\begin{array}{cccccc}
       \xi_k & -\xi_{k+1} &  &  &  & \\
       0 & 0 & -\xi_{k+2} &  &  &  \\
         & \xi_{k+2} & 0 & \ddots & \\
         &  & \ddots & \ddots & \ddots & \\
         &  &  & \ddots & 0 & -\xi_{s-1} \\
         &  &  &  & \xi_{s-1} & 0
\end{array}\right|=\xi_k^2 \left|\begin{array}{cccc}
       0 & -\xi_{k+2} &  &  \\
       \xi_{k+2} & \ddots & \ddots &  \\
         & \ddots & 0 & -\xi_{s-1} \\
         &  & \xi_{s-1} & 0
\end{array}\right| \\
&=\xi_k^2 D_{k+2}
\end{split}
\end{align*}
for $k\leq s-3$. Repeated insertion of this formula contributes to 
\begin{align}\label{Eq. D_2 for even s}
\begin{split}
&D_2=\xi_2^2 \xi_4^2 \cdots \xi_{2m-4}^2 D_{2m-2}=\prod\limits_{n=1}^{m-2}\xi_{2n}^2 \left|\begin{array}{ccc}
       0 & -\xi_{2m-2} & 0 \\
       \xi_{2m-2} & 0 & -\xi_{2m-1} \\
       0 & \xi_{2m-1} & 0 \\
\end{array}\right|=0, \\
&D_3=\xi_3^2 \xi_5^2 \cdots \xi_{2m-3}^2 D_{2m-1}=\prod\limits_{n=1}^{m-2}\xi_{2n+1}^2 \left|\begin{array}{cc}
       0 & -\xi_{2m-1} \\
       \xi_{2m-1} & 0 \\
\end{array}\right|=\prod\limits_{n=1}^{m-1}\xi_{2n+1}^2 
\end{split}
\end{align}
when $s=2m\ (m\in \mathbb{N}^+)$. In the case that $s=2m-1\ (m\in \mathbb{N}^+)$, (\ref{Eq. D_2 for even s}) becomes
\begin{align}\label{Eq. D_2 for odd s}
D_2=\prod\limits_{n=1}^{m-1}\xi_{2n}^2,\ D_3=0.  
\end{align}
Consider that $\xi_k=\frac{1}{2\sqrt{4k^2-1}}\ (k=1,2,\cdots)$, it can be deduced that for $s=2m$,
\begin{align}\label{Eq. detA even}
\begin{split}
\mathrm{det}X_G&=\frac{1}{2}D_2+\xi_1^2 D_3=\prod\limits_{n=0}^{m-1}\xi_{2n+1}^2=\prod\limits_{n=0}^{m-1}\frac{1}{4(4n+1)(4n+3)} \\
&=\prod\limits_{n=0}^{m-1}\frac{(2n+1)(2n+2)}{(4n+1)(4n+2)(4n+3)(4n+4)}=\frac{(2m)!}{(4m)!}=\frac{s!}{(2s)!}.
\end{split}  
\end{align}
On the other hand, for $s=2m-1$ we have 
\begin{align}\label{Eq. detA odd}
\begin{split}
\mathrm{det}X_G&=\frac{1}{2}D_2+\xi_1^2 D_3=\frac{1}{2}\prod\limits_{n=1}^{m-1}\xi_{2n}^2=\frac{1}{2}\prod\limits_{n=1}^{m-1}\frac{1}{4(4n-1)(4n+1)} \\
&=\frac{1}{1\cdot2}\prod\limits_{n=1}^{m-1}\frac{2n(2n+1)}{(4n-1)4n(4n+1)(4n+2)}=\frac{(2m-1)!}{(4m-2)!}=\frac{s!}{(2s)!}.
\end{split}  
\end{align}
Therefore, we reach the conclusion of this lemma by (\ref{Eq. detA even}) and (\ref{Eq. detA odd}).
\end{proof}

The nonsingularity of matrix $A$ can be immediately verified by Lemma \ref{Lem. detA}, then Lemma \ref{Lem. rho} will be proven based on Lemma \ref{Lem. G}$\sim$\ref{Lem. detA}.

\begin{proof}[Proof of Lemma \ref{Lem. rho}]
On the basis of Lemma \ref{Lem. G}$\sim$\ref{Lem. X_G}, we immediately attain
\begin{align}\label{Eq. calculation of rho 1}
b^{\top}A^{-1}e_s=e_s^{\top}B^{\top}\left(G X_G^{-1} G^{-1}\right)e_s=\left(G^{\top}B e_s\right)^{\top} X_G^{-1} \left(G^{\top}B e_s\right).   
\end{align}
Consider that 
\begin{align*}
G^{\top}B e_s&=\left(\begin{array}{cccc}
    P_0(\gamma_1) & P_0(\gamma_2) & \cdots & P_0(\gamma_s) \\
    P_1(\gamma_1) & P_1(\gamma_2) & \cdots & P_1(\gamma_s) \\
    \vdots & \vdots &  &  \vdots \\
    P_{s-1}(\gamma_1) & P_{s-1}(\gamma_2) & \cdots & P_{s-1}(\gamma_s)
\end{array}\right)\left(\begin{array}{c} b_1 \\ b_2 \\ \vdots \\ b_s \end{array}\right) \\
&=\left(\begin{array}{c}
    \sum\limits_{i=1}^s b_i P_0(\gamma_i)  \\
    \sum\limits_{i=1}^s b_i P_1(\gamma_i)  \\
    \vdots \\
    \sum\limits_{i=1}^s b_i P_{s-1}(\gamma_i)
\end{array}\right)=\left(\begin{array}{c}
    \int_0^1 P_0(\lambda)d\lambda  \\
    \int_0^1 P_1(\lambda)d\lambda  \\
    \vdots \\
    \int_0^1 P_{s-1}(\lambda)d\lambda
\end{array}\right) 
\end{align*}
and
\begin{align*}
\int_0^1 P_0(\lambda)d\lambda &=\int_0^1 d\lambda=1,\\
\int_0^1 P_k(\lambda)d\lambda &=\frac{\sqrt{2k+1}}{k!}\frac{\mathrm{d}^{k-1}}{\mathrm{d}\lambda^{k-1}}  \bigg(\lambda^k(\lambda-1)^k\bigg)\Bigg|_0^1 \\
&=\frac{\sqrt{2k+1}}{k!}\sum\limits_{n=0}^{k-1}\left(\begin{array}{c}
    k-1 \\ n
\end{array}\right)\frac{k!}{(k-n)!}\lambda^{k-n}\frac{k!}{(n+1)!}(\lambda-1)^{n+1}\Bigg|_0^1=0,\ k\geq 1,
\end{align*}
then (\ref{Eq. calculation of rho 1}) together with (\ref{Eq. D_2 for even s}), (\ref{Eq. D_2 for odd s}) implies that
\begin{align}\label{Eq. calculation of rho 2}
b^{\top}A^{-1}e_s=\left(X_G^{-1}\right)_{11}=\frac{1}{\mathrm{det}X_G}D_2=\left\{\begin{array}{cc}
   0, & \hbox{for even}\ s,  \\
   2, & \hbox{for odd}\ s.
\end{array}\right.   
\end{align}
Here $\left(X_G^{-1}\right)_{11}$ represents the $(1,1)$-element of the matrix $X_G^{-1}$, which can be calculated by 
$$X_G^{-1}=\frac{1}{\mathrm{det}X_G}X_G^{\ast}$$
with the adjoint matrix $X_G^{\ast}$. Finally, (\ref{Eq. calculation of rho 2}) leads to $\rho=(-1)^s$, which completes the proof.
\end{proof}

On the foundation of previous conclusions, it is time to prove Theorem \ref{Thm. global error of Gauss method}.

\begin{proof}[Proof of Theorem \ref{Thm. global error of Gauss method}]
Take $\widetilde{x}=\left(\widetilde{\boldsymbol{\Psi}}; \dot{\boldsymbol{\theta}}; \boldsymbol{\theta}\right),\ \widetilde{y}=\dot{\widetilde{\boldsymbol{\Psi}}}$, then $x(t)$ given by (\ref{Eq. state, input, output}) can be written as $x(t)=\left(\widetilde{y}(t);\widetilde{x}(t);t\right)$. Let $x_0=\left(\widetilde{y}_n;\widetilde{x}_n;t_n\right)$ be the numerical solution to (\ref{Eq. collocation for system}) at $t_n=nh$, and $x_f=\left(\widetilde{y}_{n+1};\widetilde{x}_{n+1};t_{n+1}\right)$ be the numerical solution at $t_{n+1}=(n+1)h$. Therefore, if we set $k_i=\left(\dot{\widetilde{Y}}_{ni};\dot{\widetilde{X}}_{ni};1\right)$, then (\ref{Eq. collocation for system}) is equivalent to  
\begin{subequations}\label{Eq. direct approach}
\begin{align}
&\widetilde{x}_{n+1}=\widetilde{x}_n+h\sum\limits_{i=1}^s b_i \dot{\widetilde{X}}_{ni},\ \widetilde{X}_{ni}=\widetilde{x}_n+h\sum\limits_{j=1}^s a_{ij} \dot{\widetilde{X}}_{nj}, \\
&\widetilde{y}_{n+1}=\widetilde{y}_n+h\sum\limits_{i=1}^s b_i \dot{\widetilde{Y}}_{ni},\ \widetilde{Y}_{ni}=\widetilde{y}_n+h\sum\limits_{j=1}^s a_{ij} \dot{\widetilde{Y}}_{nj}, \\
&\dot{\widetilde{X}}_{ni}=\widetilde{f}\left(t_n+\gamma_i h,\widetilde{X}_{ni},\widetilde{Y}_{ni}\right),\
0=\widetilde{g}\left(t_n+\gamma_i h,\widetilde{X}_{ni},\widetilde{Y}_{ni}\right),
\end{align}
\end{subequations}
where 
\begin{align}\label{Eq. new f,g}
\begin{split}
 &\widetilde{f}(t,\widetilde{x},\widetilde{y})=\left(\begin{array}{c}
\dot{\widetilde{\boldsymbol{\Psi}}} \\
-J^{-1}D\dot{\boldsymbol{\theta}}-J^{-1}K\boldsymbol{\theta}+J^{-1}\left(T-\frac{1}{2}\widetilde{\boldsymbol{\Psi}}^{\top} \frac{\partial \widetilde{\Gamma}(\boldsymbol{\theta})}{\partial \boldsymbol{\theta}} \widetilde{\boldsymbol{\Psi}}\right) \\
\dot{\boldsymbol{\theta}} \\
\end{array}\right),  \\
&\widetilde{g}(t,\widetilde{x},\widetilde{y})=\widetilde{K}_R\dot{\widetilde{\boldsymbol{\Psi}}}+\left(\widetilde{K}_L+\widetilde{\Gamma}(\boldsymbol{\theta})\right) \widetilde{\boldsymbol{\Psi}}-\widetilde{I}_s(t).    
\end{split}
\end{align}
In reality, (\ref{Eq. direct approach}) coincides with the collocation method, i.e. $s$-stage Runge-Kutta method with the coefficients given by (\ref{Eq. collocation coefficient}), applied to the generator system in (\ref{Eq. differential-algebraic}) form through the \textit{direct approach}, whose details can be seen in \cite[pp. 24$\sim$25]{NSDAS} and \cite[Section 5.2]{DAE}. Since the Jacobian matrix $\widetilde{g}_{\widetilde{y}}(t,\widetilde{x},\widetilde{y})=\widetilde{K}_R$ is always nonsingular with a bounded inverse (see (\ref{Eq. new coefficient})), the generator system in (\ref{Eq. differential-algebraic}) form is an index-1 differential-algebraic system according to \cite[(1.4)$\sim$(1.5)]{NSDAS}. Notice that the $s$-stage Gauss method is of order $2s$ with $C(s)$ satisfied, thus Theorem \ref{Thm. global error of Gauss method} can be proven through Theorem \ref{Thm. convergence result} together with the nonsingularity of matrix $A$ and Lemma \ref{Lem. rho}.
\end{proof}

\small


\begin{thebibliography}{11}\addtolength{\itemsep}{1.2ex}
\bibitem[1]{PSSC} Kundur P. \textit{Power System Stability and Control}. New York: McGraw-hill, 1994.

\bibitem[2]{SGM} Brandwajn V. \textit{Synchronous generator models for the simulation of electromagnetic transients}. PhD Thesis, University of British Columbia, Canada, 1977.

\bibitem[3]{PSETS} Watson N and Arrillaga J. \textit{Power systems electromagnetic transients simulation}. London: The Institution of Engineering and Technology, 2003.

\bibitem[4]{DMMS} Ji F, Gao L and Lin C. Dynamics Model and Motion Stability of AC Synchronous Generator. Submitted to \textit{Proc CSEE}, 2023.

\bibitem[5]{SODE1} Hairer E, Nørsett SP and Wanner G. \textit{Solving Ordinary Differential Equations I: Nonstiff Problems}. 2nd ed. Berlin: Springer-Verlag, 1993, pp.356-360.

\bibitem[6]{S1} Feng K. In: Feng K (eds) \textit{Proceedings of the 1984 Beijing Symposium on Differential Geometry and Differential Equations}. Beijing: Science Press, 1985, pp.42-58.

\bibitem[7]{GNI} Hairer E, Lubich C. and Wanner G. \textit{Geometric Numerical Integration: Structure Preserving Algorithms for Ordinary Differential Equations}. 2nd ed. Berlin: Springer-Verlag, 2006, pp.179-195.

\bibitem[8]{S2} Sanz-Serna JM. Symplectic integrators for Hamiltonian problems: an overview. \textit{Acta Numer} 1992; 1: 243-286.

\bibitem[9]{S3} Tang Y, Pérez-García VM and Vázquez L. Symplectic methods for the Ablowitz-Ladik model. \textit{Appl Math Comput} 1997; 2: 17-38.

\bibitem[10]{CPD1} He Y, Zhou Z, Sun Y, et al. Explicit K-symplectic algorithms for charged particle dynamics. \textit{Phys Lett A} 2017; 381: 568-573.

\bibitem[11]{CPD3} Tao M. Explicit high-order symplectic integrators for charged particles in general electromagnetic fields. \textit{J Comput Phys} 2016; 327: 245-251.

\bibitem[12]{CPD4} Zhang R, Wang Y, He Y, et al. Explicit symplectic algorithms based on generating function for relativistic charged particle dynamics in time-dependent electromagnetic field. \textit{Phys Plasmas} 2018; 25: 022117.

\bibitem[13]{CPD5} Zhao Z, He Y, Sun Y, et al. Explicit symplectic methods for solving charged particle trajectories. \textit{Phys Plasmas} 2017; 24: 052507.

\bibitem[14]{CPD2} Shi Y, Sun Y, Wang Y, et al. Study of adaptive symplectic methods for simulating charged particle dynamics. \textit{J Comput Dynam} 2019; 6: 429-448.

\bibitem[15]{GCD1} Zhu B, Hu Z, Tang Y, et al. Symmetric and symplectic methods for gyrocenter dynamics in time-independent magnetic fields. \textit{Int J Model Simul Sci Comput} 2016; 07: 1650008.

\bibitem[16]{GCD2} Zhang R, Liu J, Tang Y, et al. Canonicalization and symplectic simulation of the gyrocenter dynamics in time-independent magnetic fields. \textit{Phys Plasmas} 2014; 21: 032504.

\bibitem[17]{NLS1} Zhu B, Tang Y, Zhang R, et al. Symplectic simulation of dark solitons motion for nonlinear Schrodinger equation. \textit{Numer. Algorithms} 2019; 81: 1485-1503.

\bibitem[18]{NLS2} Zhang R, Huang J, Tang Y, et al. Revertible and Symplectic Methods for the Ablowitz-Ladik Discrete Nonlinear Schrodinger Equation. In: Crosbie R, Vakilzadian H, Ericsen T, et al. (eds) \textit{Proceedings of the 2011 Grand Challenges on Modeling and Simulation Conference}, Hague, Netherlands, 27-30 June 2011, pp.297-306.

\bibitem[19]{PIC1} Qin H, Liu J, Xiao J, et al. Canonical symplectic particle-in-cell method for long-term large-scale simulations of the Vlasov-Maxwell system. \textit{Nucl Fusion} 2015; 56: 014001.

\bibitem[20]{PIC2} Xiao J, Qin H, Liu J, et al. Explicit high-order non-canonical symplectic particle-in-cell algorithms for Vlasov-Maxwell systems. \textit{Phys Plasmas} 2015; 22: 112504.

\bibitem[21]{NHO} Tu X, Murua A and Tang Y. New high order symplectic integrators via generating functions with its application in many-body problem. \textit{Bit Numer Math} 2020; 60: 509-535.

\bibitem[22]{SKS} Zhu B, Zhang R, Tang Y, et al. Splitting K-symplectic methods for non-canonical separable Hamiltonian problems. \textit{J Comput Phys} 2016; 322: 387-399.

\bibitem[23]{EKS} Zhu B, Ji L, Zhu A, et al. Explicit K-symplectic methods for nonseparable non-canonical Hamiltonian systems. \textit{Chin Phys B} 2023; 32: 020204.

\bibitem[24]{pHDAE} Mehrmann V and Morandin R. Structure-preserving discretization for port-Hamiltonian descriptor systems. In: \textit{2019 IEEE 58th Conference on Decision and Control (CDC)}, Nice, France, 11-13 December 2019, pp.6863-6868. 

\bibitem[25]{PHST} Van Der Schaft AJ and  Jeltsema D. Port-Hamiltonian Systems Theory: An Introductory Overview. \textit{Found Trends Syst Control} 2014; 1: 173-378.

\bibitem[26]{FBMSR} IEEE Committee. First benchmark model for computer simulation of subsynchronous resonance. \textit{IEEE Trans Power Appar Syst} 1977; 96: 1565-1572.

\bibitem[27]{DTP} Ji F, Gao L and Lin C. Dynamics of three phase AC systems and VSC access problem research. \textit{Proc CSEE} 2022; 42, doi: 10.13334/j.0258-8013.pcsee.210027.

\bibitem[28]{EPPC} Celledoni E and Høiseth EH. Energy-Preserving and Passivity-Consistent Numerical Discretization of Port-Hamiltonian Systems. \textit{arXiv preprint arXiv}:1706.08621.  
\bibitem[29]{NSDAS} Hairer E, Lubich C and Roche M. \textit{The Numerical Solution of Differential-Algebraic Systems by Runge-Kutta Methods}. Berlin: Springer-Verlag, 1989.

\bibitem[30]{SODE2} Hairer E and Wanner G. \textit{Solving Ordinary Differential Equations II: Stiff and Differential-Algebraic Problems}. 2nd ed. Berlin: Springer-Verlag, 1996.

\bibitem[31]{HASM} Ehle BL. High order A-stable methods for the numerical solution of systems of DEs. \textit{Bit Numer Math} 1968; 8: 276-278.

\bibitem[32]{DAE} Kunkel P and Mehrmann V. \textit{Differential-Algebraic Equations. Analysis and Numerical Solution}. Z\"urich: European Mathematical Society Publishing House, 2006.


\end{thebibliography}
\end{document}